\newtheorem{proposition}{Proposition}[section]
\newtheorem{lemma}[proposition]{Lemma}
\newtheorem{corollary}[proposition]{Corollary}
\newtheorem{theorem}[proposition]{Theorem}
\theoremstyle{definition}
\newtheorem{definition}[proposition]{Definition}
\newtheorem{example}[proposition]{Example}
\newtheorem{remark}[proposition]{Remark}
\newtheorem{remarks}[proposition]{Remarks}
\newcommand{\thlabel}[1]{\label{th:#1}}
\newcommand{\thref}[1]{Theorem~\ref{th:#1}}
\newcommand{\selabel}[1]{\label{se:#1}}
\newcommand{\seref}[1]{Section~\ref{se:#1}}
\newcommand{\lelabel}[1]{\label{le:#1}}
\newcommand{\leref}[1]{Lemma~\ref{le:#1}}
\newcommand{\prlabel}[1]{\label{pr:#1}}
\newcommand{\prref}[1]{Proposition~\ref{pr:#1}}
\newcommand{\colabel}[1]{\label{co:#1}}
\newcommand{\coref}[1]{Corollary~\ref{co:#1}}
\newcommand{\relabel}[1]{\label{re:#1}}
\newcommand{\reref}[1]{Remark~\ref{re:#1}}
\newcommand{\exlabel}[1]{\label{ex:#1}}
\newcommand{\exref}[1]{Example~\ref{ex:#1}}
\newcommand{\delabel}[1]{\label{de:#1}}
\newcommand{\deref}[1]{Definition~\ref{de:#1}}
\newcommand{\eqlabel}[1]{\label{eq:#1}}
\newcommand{\equref}[1]{(\ref{eq:#1})}
\def\RR{{\mathbb R}}
\newcommand{\Cc}{\mathcal{C}}
\def\*C{{}^*\hspace*{-1pt}{\Cc}}
\def\text#1{{\rm {\rm #1}}}
\begin{document}

\title[Unified products for Jordan algebras. Applications]
{Unified products for Jordan algebras. Applications}

\author{A. L. Agore}
\address{Simion Stoilow Institute of Mathematics of the Romanian Academy, P.O. Box 1-764, 014700 Bucharest, Romania and Vrije Universiteit Brussel, Pleinlaan 2, B-1050 Brussels, Belgium}
%\address{Vrije Universiteit Brussel, Pleinlaan 2, B-1050 Brussels, Belgium}
\email{ana.agore@vub.be and ana.agore@gmail.com}

\author{G. Militaru}
\address{Faculty of Mathematics and Computer Science, University of Bucharest, Str.
Academiei 14, RO-010014 Bucharest 1, Romania and Simion Stoilow Institute of Mathematics of the Romanian Academy, P.O. Box 1-764, 014700 Bucharest, Romania}
%\address{Simion Stoilow Institute of Mathematics of the Romanian Academy, P.O. Box 1-764, 014700 Bucharest, Romania}
\email{gigel.militaru@fmi.unibuc.ro and gigel.militaru@gmail.com}
\subjclass[2010]{16T10, 16T05, 16S40}

\thanks{This work was supported by a grant of the Ministry of Research,
Innovation and Digitization, CNCS/CCCDI--UEFISCDI, project number
PN-III-P4-ID-PCE-2020-0458, within PNCDI III}

\subjclass[2020]{17C10, 17C50, 17C55} \keywords{Jordan algebras,
unified products, non-abelian
cohomology.}

\begin{abstract}
Given a Jordan algebra $A$ and a vector space $V$, we describe and classify all Jordan algebras containing $A$ as a subalgebra of codimension
${\rm dim}_k (V)$ in terms of a non-abelian cohomological type object ${\mathcal J}_{A} \, (V, \, A)$.
 Any such algebra is isomorphic to a newly introduced object called \emph{unified
product} $A \, \natural \, V$. The crossed/twisted
product of two Jordan algebras are introduced as
special cases of the unified product and the role of the subsequent
problem corresponding to each such product is discussed. The
non-abelian cohomology ${\rm H}^2_{\rm nab} \, (V, \, A )$ associated to two
Jordan algebras $A$ and $V$ which classifies all extensions of $V$ by $A$ is also constructed.
Several applications and examples are given: we prove that ${\rm H}^2_{\rm nab} \, (k, \, k^n)$
is identified with the set of all matrices $D\in M_n(k)$ satisfying $2\, D^3 - 3 \, D^2 + D = 0$.
\end{abstract}

\maketitle

\section*{Introduction}

Jordan algebras have been first introduced in mathematical physics
by P. Jordan \cite{pjordan} in order to achieve an axiomatization
for the algebra of observables in quantum mechanics. Since then,
Jordan algebra theory has developed into a rich and fruitful one,
as evidenced by the various contexts where it makes an appearance
such as supersymmetry, the theory of superstrings, projective geometry, Lie algebras and algebraic groups,
representation theory or functional analysis (see \cite{AmWa, CDE, CoHo, Farnsworth, lopez, Mc} and the references
therein). One of the papers which generated a lot of interest and determined a turning point
for Jordan algebra theory is \cite{jordannumm} where all simple
finite-dimensional formally real algebras are classified. Another
important contribution is the paper of A. Albert \cite{albert}
where an example of an \emph{exceptional} Jordan algebra is
constructed: the $27$-dimensional algebra of self-adjoint $3\times
3$ matrices over the octonions. The study of Jordan algebras
experienced a major change in the second half of the last century
due mainly to the algebra school in Novosibirsk. In this context, major contributions which lead to a better understanding of
the field have been achieved by E. Zelmanov which extended results from the classical
theory of finite dimensional Jordan algebras to the infinite
dimensional case. In particular, all simple Jordan algebras,
including infinite-dimensional ones, have been classified by what
is now called \emph{Zelmanov's exceptional theorem} (\cite[Chapter
8]{Mc}): any simple exceptional Jordan algebra is an Albert
algebra of dimension $27$ over its center. The close relation
between Jordan algebras and Lie algebras is also highlighted by
the famous Kantor-Koecher-Tits construction \cite{lopez}.

The present paper is devoted to the study of the \emph{extending
structures problem} (ES-problem) introduced in \cite{am-2011} for
arbitrary categories and the \emph{extension problem} listed
below. In the context of Jordan algebras it comes down to the
following problem:

\textbf{Extending structures problem.} \textit{Let $A$ be a Jordan
algebra and $E$ a vector space containing $A$ as a subspace.
Describe and classify up to an isomorphism that stabilizes $A$
(i.e. acts as the identity on $A$) the set of all Jordan algebra
structures that can be defined on $E$ such that $A$ becomes a
Jordan subalgebra of $E$.}

Equivalently, if we fix $V$ a complement of $A$ in the vector space $E$, the ES-problem can be restated by asking for the description and classification of all
Jordan algebras which contain and stabilize $A$ as a subalgebra of
codimension equal to ${\rm dim} (V)$. As explained in
\cite{am-2014}, the ES problem generalizes and unifies two famous
and intensively studied open problems: the
\emph{extension problem} (introduced at the level of groups by
H\"{o}lder) and the \emph{factorization problem} (stemmed in group theory as well, in the work of Ore). The extension problem, formulated for
Jordan algebras, can be stated as follows:

\textbf{Extension problem.} \textit{Let $A$ and $V$ be two given
Jordan algebras. Describe and classify all extensions of $V$ by
$A$, i.e. all Jordan algebra $E$ that fit into an exact sequence
of Jordan algebras:} $\xymatrix{ 0 \ar[r] & A \ar[r]^{i} & E
\ar[r]^{p} & V \ar[r] & 0 }$.

The precise meaning of classification in the context of the extension problem appears in \seref{prel}.
The first result in this direction was proved by Jacobson \cite[Theorem 12]{jacb} who gave a partial
answer to the extension problem in the case of null extensions (i.e. such that
$a\cdot b := 0$, for all $a$, $b\in A$): in this case, similar to the group (or Lie algebra) case,
the extensions of $V$ by $A$ are parameterised by the second cohomology group ${\rm H}^2 \, (V,
\, A)$.

The paper is organized as follows: in \seref{prel} we recall some
basic concepts in the context of Jordan algebras.
\seref{unifiedprod} is devoted to the study of the extending structures problem
following the strategy we previously developed in \cite{am-2014,
am-2019}. \thref{main1}, the main result of this section, provides the theoretical answer to the ES-problem by constructing a \emph{non-abelian cohomological type object} ${\mathcal J}_{A} \,
(V, \, A )$ which classifies all Jordan algebras containing and stabilizing $A$ as a
subalgebra of codimension equal to ${\rm dim} (V)$.  Any such
algebra is isomorphic to a \emph{unified product} $A \, \natural
\, V$, a product associated to a Jordan algebra $A$ and a vector
space $V$ connected through two \emph{actions} and a generalized
\emph{cocycle}. It is worth mentioning that the unified
product generalizes spin factor Jordan algebras
(\exref{spinfactorm}) as first constructed in \cite{jordannumm}.
\thref{main1} offers the theoretical answer to the ES-problem.
However, computing the classifying object ${\mathcal J}_{A} \, (V,
\, A)$, for a given Jordan algebra $A$ and a vector space $V$, is
a highly non-trivial task. In \thref{clasdim1} we explicitly
compute ${\mathcal J}_{A} \, (k, \, A)$ which is the key step in
classifying finite dimensional \emph{flag Jordan algebras} introduced in \deref{flagex} as a generalization of supersolvable
Jordan algebras. Another application is given in \thref{recsiGal} which proves an Artin type theorem for Jordan algebras.
More precisely, we show that a Jordan algebra on which a finite group acts can be written as a twisted product between its subalgebra of invariants and a given complement. This is
a first step for possible future developments in the \emph{invariant
theory} of Jordan algebras.

One of the main special cases of the unified product is the crossed products of Jordan algebras, will be treated separately in \seref{Hilbertext}.
As in the group (or Lie algebra) case, the crossed product of two Jordan algebras plays a key role in the study of the extension problem. The theoretical
answer to the extension problem in its full generality is given in \thref{scchjor}:
all extensions of a given Jordan algebra $V$ by a Jordan algebra
$A$ are classified by the \emph{non-abelian cohomology} ${\rm
H}^2_{\rm nab} \, (V, \, A )$ which is explicitly constructed. This result is the non-abelian generalization of \cite[Theorem
12]{jacb}. The classification of finite dimensional Jordan
algebras was only recently initiated: in
\cite{km0} all three-dimensional Jordan algebras over $\RR$ are
classified while in \cite{martin} all four-dimensional Jordan
algebras over an algebraically closed field of characteristic
different than $2$ are classified. The classification of nilpotent Jordan algebras was completed up to dimension $5$ (\cite{HA, km}) and partial results are available in dimension $6$ \cite{AbM}. In order to obtain classification results in higher dimensions or for other classes of Jordan algebras, certain constructions such as the crossed or the bicrossed product might be useful. In fact, \coref{iterare} proves the crucial role played
by crossed products for the classification problem: any finite dimensional
Jordan algebra is isomorphic to an iterated crossed product
of the form $\Bigl( \cdots \bigl( (S_1 \# \, S_2)\# \, S_3 \bigl)
\# \, \cdots \# \, S_t\Bigl)$, where $S_i$ is a finite dimensional
simple Jordan algebra for all $i= 1,\cdots, t$ (recall that simple Jordan algebras have already been classified).
Computing explicitly the non-abelian cohomology ${\rm H}^2_{\rm
nab} \, (V, \, A )$ of two given Jordan algebras $A$ and $V$ is a really challenging
problem. In \exref{nonexcoho} we compute ${\rm H}^2_{\rm nab} \, (k, \, A )$ and in particular
we show that
$$
{\rm H}^2_{\rm nab} \, (k, \, k^n ) \cong \{D \in M_n (k) \,\,  | \,\, 2\, D^3 - 3 \, D^2 + D = 0  \}
$$
where $M_n (k)$ denotes the usual space of $n\times n$-matrices over $k$.

\section{Preliminaries}\selabel{prel}
Throughout, we work over a field $k$ of characteristic $\neq 2$. A bilinear map $f : W\times W \to V$ will be called
symmetric if $f (x, \, y) = f (y, \, x)$, for all $x$, $y \in W$. If $V$ is a vector space, $V^*$ denotes its dual. ${\rm End}_k \, (V)$ stands for the associative and
unital endomorphisms algebra of $V$. The associator of an arbitrary algebra $(A, \cdot)$ is
defined by $(x, \, y, \, z) := (x\cdot y)\cdot z - x \cdot (y\cdot z)$,
for all $x$, $y$, $z\in A$.

A \emph{Jordan algebra} is a vector space $A$ together with a
bilinear map $\cdot : A \times A \to A$, called multiplication,
such that for any $a$, $b\in A$ we have:
\begin{equation*}
a\cdot b = b\cdot a, \qquad (a^2 \cdot b) \cdot a = a^2 \cdot (b
\cdot a)
\end{equation*}
i.e. $\cdot$ is commutative and satisfies the Jordan identity $(a^2, \, b, \, a) = 0$, for all $a$, $b\in A$. For all unexplained notions pertaining to
Jordan algebra theory we refer the reader to \cite{jacb, Ko, Mc, Zh2}. A Jordan algebra $A$ is called \emph{abelian} if it has trivial
multiplication, i.e. $a \cdot b := 0$ for all $a$, $b\in A$. If $A$ is an associative algebra, then $A$ with the new product defined by $x \cdot y :=
2^{-1} (xy + yx)$, for all $x$, $y\in A$ becomes a Jordan algebra
and is denoted by $A^{+}$. In particular, ${\rm End}_k \, (V)
^{+}$ is a Jordan algebra for any vector space $V$. A Jordan
algebra is said to be special if it is isomorphic to a subalgebra
of $A^{+}$, for an associative algebra $A$. Non-special Jordan
algebras are called exceptional. We denote by ${\rm Aut}_{\rm
J}(A)$ the automorphism group of the Jordan algebra $A$. If $A$ is a Jordan algebra, then it is well
known \cite{kpa} that the following \emph{polarization relation} holds for
any $x$, $y$, $z\in A$:
\begin{equation}\eqlabel{polarizare}
(x^2, \, y, \, z) + 2 \, (x\cdot z, \, y, x) = 0
\end{equation}
This can be easily proved by taking $a : = x + \lambda z$, $b:= y$, for any $\lambda \in k^*$, in the Jordan identity.
For an arbitrary non-associative algebra $(A, \cdot)$ we define the so-called \emph{polarization map} of $A$ as follows:
\begin{equation}\eqlabel{polarmap}
P: A\times A \times A \to A, \qquad P(x, y, z) := (x^2, \, y, \, z) + 2 \, (x\cdot z, \, y, x)
\end{equation}
for all $x$, $y$, $z\in A$.

\begin{remark} \relabel{impbase}
Unlike the case of associative or Lie algebras, showing that the Jordan algebra identity holds is rather cumbersome. More precisely, the major drawback in the case of Jordan algebras is that one needs to check the Jordan identity on the entire underlying vector space not only on the elements of its $k$-basis.
\end{remark}

The unusual situation in \reref{impbase} is explained by the following result that will be used later on in the proof of \thref{1}:

\begin{proposition} \prlabel{missingrel}
Let $(W, \circ)$ be a commutative algebra such that $W = A \, + \, V$, for two subspaces $A$, $V \subseteq W$. Assume that the Jordan identity
$(\alpha^2, \, \beta, \, \alpha) = 0$ holds for all $\alpha$, $\beta \in A \cup V$. Then, $(W, \circ)$ is a Jordan algebra if and only if for any $a$, $b \in A$, $x$, $y\in V$ we have:
\begin{equation}\eqlabel{missing}
P(a, \, b, \, x) + P(x, \, b, \, a) + P(a, \, y, \, x) + P(x, \, y, \, a) = 0
\end{equation}
where $P: W^3 \to W$ is the polarization map of $W$.
\end{proposition}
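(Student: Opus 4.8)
The plan is to reduce the verification of the Jordan identity on the whole space $W$ — which, by \reref{impbase}, cannot in general be checked on a basis — to the single bilinear-type condition \equref{missing}, via a linearization argument. For $w$, $w' \in W$ put $J(w, \, w') := (w^2, \, w', \, w)$; since $\circ$ is already commutative, $(W, \circ)$ is a Jordan algebra if and only if $J(w, \, w') = 0$ for all $w$, $w' \in W$. The computational heart of the matter is the following identity, valid in any commutative algebra: for all $a$, $x$, $w' \in W$,
\begin{equation*}
J(a + x, \, w') = J(a, \, w') \, + \, P(a, \, w', \, x) \, + \, P(x, \, w', \, a) \, + \, J(x, \, w'),
\end{equation*}
where $P$ is the polarization map from \equref{polarmap}. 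To obtain it one expands $(a+x)^2 = a^2 + 2 \, (a \circ x) + x^2$, uses trilinearity of the associator, and regroups the six resulting terms into the four displayed summands; this is a short and routine check.

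Next I would exploit the hypothesis. Since $(\alpha^2, \, \beta, \, \alpha) = 0$ for all $\alpha$, $\beta \in A \cup V$, the associator is linear in its middle argument, and $A \cup V$ spans $W = A + V$, it follows that $(\alpha^2, \, w, \, \alpha) = 0$ for every $\alpha \in A \cup V$ and every $w \in W$. In particular $J(a, \, w') = 0$ for all $a \in A$, $w' \in W$, and $J(x, \, w') = 0$ for all $x \in V$, $w' \in W$.

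Finally, let $w$, $w' \in W$ be arbitrary. Using $W = A + V$ write $w = a + x$ and $w' = b + y$ with $a$, $b \in A$ and $x$, $y \in V$. The linearization identity above together with the vanishing just recorded gives $J(w, \, w') = P(a, \, w', \, x) + P(x, \, w', \, a)$, and expanding the middle slot $w' = b + y$ by linearity of $P$ there yields
\begin{equation*}
J(w, \, w') = P(a, \, b, \, x) + P(a, \, y, \, x) + P(x, \, b, \, a) + P(x, \, y, \, a),
\end{equation*}
that is, $J(w, \, w')$ equals the left-hand side of \equref{missing}. Hence $J$ vanishes identically on $W \times W$ — equivalently, $(W, \circ)$ is a Jordan algebra — precisely when \equref{missing} holds for all $a$, $b \in A$ and $x$, $y \in V$, which proves the claim (in the ``only if'' direction the hypothesis is automatic, and note that the non-uniqueness of the decomposition $w = a + x$ causes no trouble since \equref{missing} is quantified over all such quadruples). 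The only step that genuinely deserves attention is the passage from the hypothesis, which asserts the Jordan identity only for elements of the spanning set $A \cup V$, to the vanishing of $J(a, \, \cdot)$ and $J(x, \, \cdot)$ on all of $W$; everything else is bookkeeping with the linearized Jordan identity.
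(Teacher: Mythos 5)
Your proof is correct and follows essentially the same route as the paper's: both expand the Jordan identity for $X=a+x$, $Y=b+y$ using trilinearity of the associator, cancel the pure terms $(\alpha^2,\beta,\alpha)$ via the hypothesis, and identify the surviving mixed terms with the four polarization summands of \equref{missing}. Your packaging of the expansion as a linearization identity for $J$, together with the observation that $J(\alpha,\cdot)$ vanishes on all of $W$ by linearity in the middle slot, is a slightly cleaner organization of the same computation, and your remarks on the non-uniqueness of the decomposition are a welcome point of care that the paper leaves implicit.
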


\begin{proof}
For arbitrary elements $X$, $Y\in W$ we write $X = a + x$ and $Y = b + y$, for some $a$, $b \in A$ and $x$, $y\in V$. Then, it can be easily proved that:
\begin{eqnarray*}
(X^2 \circ Y) \circ X &=& (a^2 \circ b) \circ a + (a^2 \circ b) \circ x + (a^2 \circ y) \circ a + (a^2 \circ y) \circ x + \\
&+& (x^2 \circ b) \circ a + (x^2 \circ b) \circ x + (x^2 \circ y) \circ a + (x^2 \circ y) \circ x + \\
&+& 2 \, \bigl((a\circ x) \circ b\bigl) \circ a + 2 \, \bigl((a\circ x) \circ b\bigl) \circ x + 2 \, \bigl((a\circ x) \circ y\bigl) \circ a + \\
&+& 2 \, \bigl((a\circ x) \circ y\bigl) \circ x
\end{eqnarray*}
and
\begin{eqnarray*}
X^2 \circ (Y \circ X) &=& a^2 \circ (b \circ a) + a^2 \circ (b \circ x) + a^2 \circ (y \circ a) + a^2 \circ (y \circ x) + \\
&+& x^2 \circ (b \circ a) + x^2 \circ (b \circ x) + x^2 \circ (y \circ a) + x^2 \circ (y \circ x) + \\
&+& 2 \, (a\circ x) \circ (b \circ a) + 2 \, (a\circ x) \circ (b \circ x) + 2 \, (a\circ x) \circ (y \circ a) + \\
&+& 2 \, (a\circ x) \circ (y \circ x)
\end{eqnarray*}
Taking into account that the Jordan identity
$(\alpha^2, \, \beta, \, \alpha) = 0$ holds for any pair of elements $\alpha$, $\beta \in A \cup V$, we obtain that
$(X^2 \circ Y) \circ X = X^2 \circ (Y \circ X)$ if and only if
\begin{eqnarray*}
&& (a^2, \, b, \, x) + (a^2, \, y, \, x) + (x^2, \, b, \, a) + (x^2, \, y, \, a) + \\
&& + 2 \, (a \circ x, \, b, \, a) + 2 \, (a \circ x, \, b, \, x) + 2 \, (a \circ x, \, y, \, a) + 2 \, (a \circ x, \, y, \, x) = 0
\end{eqnarray*}
which is equivalent to \equref{missing}, using the formula \equref{polarmap}. The proof is now finished.
\end{proof}

\begin{definition}\delabel{moduleJ}
A \emph{right action} of a Jordan algebra $A$ on a vector space
$V$ is a bilinear map $\triangleleft : V \times A \to V$ such that
for any $a\in A$ and $x\in V$ we have:
\begin{eqnarray} \eqlabel{actiunedr}
(x \triangleleft a^2) \triangleleft a = (x \triangleleft a)
\triangleleft a^2
\end{eqnarray}
Similarly, a \emph{left action} of $A$ on $V$ is a bilinear map
$\triangleright : A \times V \to V$ such that for any $a\in A$ and
$x\in V$ we have:
\begin{eqnarray} \eqlabel{actiunest}
a \triangleright  (a^2 \triangleright x) = a^2 \triangleright (a
\triangleright x)
\end{eqnarray}
\end{definition}

The canonical maps $\triangleright : A \times A \to A$ and
$\triangleright : A \times A^* \to A^*$ given for any $a$, $b\in
A$ and $a^* \in A^*$ by:
\begin{equation} \eqlabel{calduramare}
a \triangleright b := a \cdot b, \qquad (a \triangleright a^*) (b)
:= a^* (a\cdot b)
\end{equation}
are left actions of $A$ on $A$ and $A^* = {\rm Hom}_k
\, (A, \, k)$, respectively.

\begin{remarks} \relabel{Eilenberg}
1. Since a Jordan algebra $A$ is commutative any right/left action
is a left/right action too; nevertheless, we shall use both types
of actions throughout the paper in order to indicate precisely the
position of the acting algebra. The set of all right/left actions
of $A$ on $V$ is in bijection with the set of all linear maps
$\varphi : A \to {\rm End}_k \, (V)$ such that $\varphi (a) \circ
\varphi (a^2) = \varphi (a^2) \circ \varphi (a)$, for all $a\in A$
in the endomorphism algebra of $V$. In certain references, such a
map $\varphi : A \to {\rm End}_k \, (V)$ is called a \emph{Jacobson representation} of $A$ on $V$.
Moreover, the left/right action condition
\equref{actiunest} appears as one of the compatibilities in the definition of a
Jordan bimodule or a representation of a Jordan algebra \cite{CDDV, KS}.

2. Let $\varphi: A \to {\rm End}_k (V)^{+}$ be a morphism of Jordan
algebras. If we denote $\varphi (a) (x) = a \triangleright x$, we
have that $(a\cdot b) \triangleright x = 2^{-1} \bigl(a
\triangleright (b \triangleright x) + b \triangleright (a
\triangleright x)\bigl)$, for all $a$, $b\in A$ and $x\in V$. In
particular, for $b = a$ we obtain that $a^2 \triangleright x = a
\triangleright (a \triangleright x)$. Using this relation we can
easily see that \equref{actiunest} holds, i.e. any Jordan algebra
map $\varphi: A \to {\rm End}_k (V)^{+}$ induces a left action of
$A$ on $V$, as expected.
\end{remarks}

\begin{definition} \delabel{echivextedna}
Let $A$ and $V$ be two Jordan algebras. An \emph{extension} of $V$
by $A$ is a triple $(E, \, i, \, p)$ consisting of a Jordan
algebra $E$ and two morphisms of Jordan algebras that fit into an
exact sequence of Jordan algebras
\begin{eqnarray*}
\xymatrix{ 0 \ar[r] & A \ar[r]^{i} & E  \ar[r]^{p} & V \ar[r] & 0
}
\end{eqnarray*}
Two extensions $(E, \, i, \, p)$ and $(E', \, i', \, p')$ of $V$
by $A$ are called \emph{cohomologous} and we denote this by $(E, \,
i, \, p) \approx (E', \, i', \, p')$ if there exists a Jordan
algebra map $\varphi : E \to E'$ such that the following diagram is commutative:
\begin{eqnarray} \eqlabel{diagramaext}
\xymatrix {& A \ar[r]^{i} \ar[d]_{Id} & {E}
\ar[r]^{p} \ar[d]^{\varphi} & V \ar[d]^{Id}\\
& A \ar[r]^{i'} & {E'}\ar[r]^{p' } & V}
\end{eqnarray}
We denote by ${\rm Ext} \, (V, \,  A)$ the set of
all equivalence classes of extensions of $V$ by $A$ via $\approx$.
We say that $\varphi: E \to E'$ \emph{stabilizes} $A$ (resp.
\emph{co-stabilizes} $V$) if the left square (resp. the right
square) of diagram \equref{diagramaext} is commutative.
\end{definition}

Any Jordan algebra map $\varphi : E \to E'$ which makes diagram
\equref{diagramaext} commutative is an isomorphism and thus
$\approx$ is indeed an equivalence relation on the class of all
extensions of $V$ by $A$. Hilbert's extension problem,
formulated for Jordan algebras, is the following difficult open question: \emph{For two given Jordan algebras $A$ and $V$ compute explicitly
the classifying object ${\rm Ext} \, (V, \, A)$.}

\section{Extending structures problem}\selabel{unifiedprod}

This section aims at providing a theoretical answer to the
extending structures problem for Jordan algebras. To this end, we introduce the following:

\begin{definition} \delabel{echivextedn}
Let $A$ be a Jordan algebra and $E$ a vector space containing $A$
as a subspace. Two Jordan algebra structures on $E$ denoted by $\cdot_E $
and $\cdot_E^{'}$, both containing $A$ as a subalgebra, are called
\emph{equivalent}, and we denote this by $(E, \, \cdot_E) \equiv
(E, \, \cdot_E^{'})$, if there exists a Jordan algebra isomorphism
$\varphi: (E, \, \cdot_E) \to (E, \, \cdot_E^{'})$ which
stabilizes $A$, that is $\varphi (a) = a$, for all $a\in A$. We
denote by ${\rm Jext} \, (E,\, A)$ the set of all equivalence
classes on the set of all Jordan algebras structures on $E$
containing $A$ as a subalgebra via the equivalence relation
$\equiv$.
\end{definition}

The set ${\rm Jext} \, (E, \, A)$ defined above is the object responsible for the classification part of the extending structures problem. We will show that ${\rm Jext} \, (E, \, A)$ is parameterized by a \emph{cohomological type object} which we will be able to construct explicitly.

\begin{definition}\delabel{exdatum}
Let $A = (A, \cdot)$ be a Jordan algebra and $V$ a vector space.
An \textit{extending datum of $A$ through $V$} is a system
$\Omega(A, \,  V) = \bigl(\triangleleft, \, \triangleright, \, f,
\,  \cdot_V \bigl)$ consisting of four bilinear maps
$$
\triangleleft : V \times A \to V, \quad \triangleright : V \times
A \to A, \quad f: V\times V \to A, \quad \cdot_V : V\times V \to V
$$
Let $\Omega(A, \, V) = \bigl(\triangleleft, \, \triangleright, \,
f, \, \cdot_V \bigl)$ be an extending datum. We denote by $ A \,
\,\natural \,_{\Omega(A, \, V)} V = A \, \,\natural \, V$ the
vector space $A \, \times V$ together with the bilinear map $
\circ : (A \times V) \times (A \times V) \to A \times V$ defined
by:
\begin{equation}\eqlabel{brackunif}
(a, x) \circ (b, y) := \bigl( a \cdot b + x \triangleright b +
y\triangleright a + f(x, \, y), \,\,  x\triangleleft b +
y\triangleleft a + x\cdot_V \, y \bigl)
\end{equation}
for all $a$, $b \in A$ and $x$, $y \in V$. The object $A
\,\natural \, V$ is called the \textit{unified product} of $A$ and
$V$ if it is a Jordan algebra with the multiplication given by
\equref{brackunif}. In this case the extending datum $\Omega(A, \,
V) = \bigl(\triangleleft, \, \triangleright, \, f, \, \cdot_V
\bigl)$ is called a \textit{Jordan extending structure} of $A$
through $V$. The maps $\triangleleft$ and $\triangleright$ are
called the \textit{actions} of $\Omega(A, \, V)$ and $f$ is called
the (non-abelian) \textit{cocycle} of $\Omega(A, \, V)$.
\end{definition}

In the sequel we use the following convention: if any of the maps of an extending datum $\Omega(A, \, V) =
\bigl(\triangleleft, \, \triangleright, \, f, \,  \cdot_V \bigl)$
is trivial then we will write the quadruple
$\bigl(\triangleleft, \, \triangleright, \, f, \, \cdot_V \bigl)$ without it.  
The first natural example of a unified product was first constructed in \cite{jordannumm} related to the classification of simple
finite-dimensional formally real Jordan algebras.

\begin{example} \exlabel{spinfactorm}
Let $V$ be a vector space equipped with a symmetric bilinear form
$f: V\times V \to k$ and let $J(V, f) : = k \times V$ with the
multiplication
\begin{equation*}
(a, x) \circ (b, y) := \bigl(a b + f(x, \, y), \,\, bx
 + a y \bigl)
\end{equation*}
for all $a$, $b \in k$ and $x$, $y \in V$. Then $J(V, f)$ is a
Jordan algebra and $J(V, f) = k \, \,\natural \, V$ is a unified
product of the Jordan algebras $k$ (with the usual multiplication) and
$V$: the maps $\triangleright$ and $\cdot_V$ are both trivial and
the (right) action $\triangleleft$ of $k$ on $V$ is just the $k$-vector space
structure on $V$. The Jordan algebra $J(V,
f)$ is called \emph{spin factor} Jordan algebra, the
\emph{Clifford Jordan algebra} or the \emph{Jordan algebra
associated to a symmetric bilinear form}. Moreover, $J(V, f)$ is
special and if $f$ is nondegenerate and ${\rm dim}_k (V) > 1$ then
$J(V, f)$ is simple \cite{Zh2}.
\end{example}

Let $\Omega(A, \, V)$ be an extending datum of $A$ through $V$. It will be worth the while to write down the following relations which hold in  $A \,\natural \, V$:

\begin{eqnarray}
(a, 0) \circ (b, y) &=& \bigl(a \cdot b + y \triangleright
a, \,\,  y \triangleleft a \bigl) \eqlabel{001}\\
(0, x) \circ (b, y) &=& \bigl( x \triangleright b + f(x, y), \,\,
x \triangleleft b + x \cdot_V y \bigl) \eqlabel{002}
\end{eqnarray}
for all $a$, $b \in A$ and $x$, $y \in V$.

\begin{theorem}\thlabel{1}
Let $\Omega(A, \,  V) = \bigl(\triangleleft, \, \triangleright, \,
f, \, \cdot_V \bigl)$ be an extending datum of a Jordan algebra $A
= (A, \cdot)$ through a vector space $V$. The following statements
are equivalent:

$(1)$ $A \,\natural \, V$ is a unified product;

$(2)$ The following compatibilities hold for any $a$, $b \in A$,
$x$, $y \in V$:
\begin{enumerate}
\item[(E1)] $f: V\times V \to A$ and $\cdot_V : V\times V
\to V$ are symmetric maps; \\
\item[(E2)] $\triangleleft : V \times A \to V$ is a right action
of $A$ on $V$, i.e. $(x \triangleleft a^2) \triangleleft a = (x
\triangleleft a) \triangleleft a^2$;\\
\item[(E3)] $a \cdot (x \triangleright a^2) + (x \triangleleft a^2
)\triangleright a$ = $a^2 \cdot (x \triangleright a) + (x
\triangleleft a )\triangleright a^2$;\\
\item[(E4)] $x \triangleright \bigl( f(x, x) \cdot a \bigl) + \, x
\triangleright (x^2 \triangleright a) + f(x^2 \triangleleft a, \,
x) \, \, = $

$f(x, x) \cdot (x \triangleright a) + x^2 \triangleright (x
\triangleright a) + (x \triangleleft a) \triangleright f(x, x) +
f(x^2, \, x \triangleleft a)$;\\
\item[(E5)] $x \triangleleft \bigl( f(x, x) \cdot a \bigl) + x
\triangleleft (x^2 \triangleright a) + (x^2 \triangleleft a) \,
\cdot_V \, x \,\, =$

$x^2 \triangleleft (x \triangleright a) + (x \triangleleft a)
\triangleleft f(x, x) + x^2 \, \cdot_V \, (x \triangleleft a)$;\\
\item[(E6)] $x \triangleright \bigl(y \triangleright f(x, x)
\bigl) + \, x \triangleright f(x^2, y) + f\bigl( y \triangleleft
f(x, x) + x^2 \,\cdot_V \, y, \, x) \bigl) \, \, = $

$f(x, x) \cdot f(x, y) + x^2 \triangleright f(x, y) + (x\cdot_ V
\, y) \triangleright f(x, x) + f(x^2, \, x\cdot_V \, y)$;\\
\item[(E7)] $x \triangleleft \bigl(y \triangleright f(x, x) \bigl)
+ \, x \triangleleft f(x^2, \, y) + \bigl(y \triangleleft f(x, x)
\bigl) \, \cdot_V \, x + (x^2 \cdot_V \, y) \cdot_V \, x \, \, = $

$x^2 \triangleleft f(x, y) + (x\cdot_V \, y)\triangleleft f(x, x)
+ x^2 \cdot_V \, (x \cdot_V \, y)$; \\
\item[(E8)] The following \emph{missing relations} hold in the algebra $(A \,\natural \, V, \, \circ)$:
\begin{equation}\eqlabel{missingrel}
P(a, \, b, \, x) + P(x, \, b, \, a) + P(a, \, y, \, x) + P(x, \, y, \, a) = 0
\end{equation}
where $P: (A \,\natural \, V)^3 \to A \,\natural \, V$ is the polarization map of $A \,\natural \, V$ and
we identify $\alpha = (\alpha, 0)$, $z = (0, z)$, for any $\alpha \in A$ and $z \in V$.
\end{enumerate}
\end{theorem}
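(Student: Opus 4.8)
The plan is to separate the two defining axioms of a Jordan algebra --- commutativity of $\circ$ and the Jordan identity --- and to dispatch the Jordan identity by means of \prref{missingrel}. Since $(A,\cdot)$ is commutative, comparing the two components of $(a,x)\circ(b,y)$ and $(b,y)\circ(a,x)$ computed from \equref{brackunif} shows at once that $\circ$ is commutative if and only if $f$ and $\cdot_V$ are both symmetric, that is, if and only if (E1) holds. From now on I assume (E1) and work inside $W := A \, \natural \, V$, identifying $a\in A$ with $(a,0)$ and $x\in V$ with $(0,x)$; then $W = A + V$ as vector spaces and $A$ is a subalgebra of $W$ isomorphic to the Jordan algebra $A$.

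Alongside \equref{001} and \equref{002} I would record the specialisations $(a,x)\circ(b,0) = \bigl(a\cdot b + x\triangleright b,\, x\triangleleft b\bigr)$ and $(a,x)\circ(0,y) = \bigl(y\triangleright a + f(x,y),\, y\triangleleft a + x\cdot_V y\bigr)$ of \equref{brackunif}, together with the squares $(a,0)\circ(a,0) = (a^2,0)$ and $(0,x)\circ(0,x) = \bigl(f(x,x),\, x\cdot_V x\bigr)$. Using these, I would check the Jordan identity $(w^2, w', w) = 0$ for all $w, w'\in A\cup V$ by the four cases according to whether $w$ and $w'$ lie in $A$ or in $V$. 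The case $w, w'\in A$ is automatic because $A$ is a Jordan subalgebra of $W$. For $w = a\in A$ and $w' = x\in V$, expanding both sides of $\bigl((a\circ a)\circ x\bigr)\circ a = (a\circ a)\circ(x\circ a)$ with the formulas above and comparing the $A$-components yields exactly (E3), while comparing the $V$-components yields exactly (E2). For $w = x\in V$ and $w' = a\in A$, the analogous expansion of $\bigl((x\circ x)\circ a\bigr)\circ x = (x\circ x)\circ(a\circ x)$ gives (E4) on the $A$-component and (E5) on the $V$-component, with the notation $x^2 := x\cdot_V x$ of the statement. Finally, for $w = x$ and $w' = y$ in $V$, expanding $\bigl((x\circ x)\circ y\bigr)\circ x = (x\circ x)\circ(y\circ x)$ gives (E6) on the $A$-component and (E7) on the $V$-component, where (E1) is used to replace $f(y,x)$ and $y\cdot_V x$ by $f(x,y)$ and $x\cdot_V y$. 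Thus (E2)--(E7) together are precisely equivalent to the Jordan identity holding on every pair of elements of $A\cup V$.

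Granting (E1)--(E7), the commutative algebra $W = A + V$ now satisfies $(w^2, w', w) = 0$ for all $w, w'\in A\cup V$, so \prref{missingrel} applies verbatim: $(W,\circ)$ is a Jordan algebra --- equivalently, $A \, \natural \, V$ is a unified product --- if and only if the missing relation \equref{missingrel} holds for all $a, b\in A$ and $x, y\in V$, which is condition (E8). Conversely, if $A \, \natural \, V$ is a unified product then $\circ$ is commutative (giving (E1)), the Jordan identity holds throughout $W$ and in particular on $A\cup V$ (giving (E2)--(E7) by the case analysis, each step of which is an equivalence), and \prref{missingrel} gives (E8). This establishes $(1)\Leftrightarrow(2)$.

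The conceptual content is carried entirely by \prref{missingrel}, which is what reduces an identity that must be verified on the whole of $W$ to a finite list of conditions; the only genuinely laborious part is the bookkeeping in the four case computations, where it pays to keep the $A$- and $V$-components separate and to flag each use of the symmetry of $f$ or of $\cdot_V$ and of the commutativity of $A$.
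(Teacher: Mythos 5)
Your proposal is correct and follows essentially the same route as the paper: commutativity of $\circ$ is shown to be equivalent to (E1), the Jordan identity is then verified case by case on pairs from $(A\times\{0\})\cup(\{0\}\times V)$ to extract (E2)--(E7), and \prref{missingrel} is invoked to reduce the remaining verification on all of $A \,\natural\, V$ to the missing relations (E8). Your explicit assignment of which axiom comes from the $A$-component and which from the $V$-component in each case is consistent with the computations the paper leaves implicit.
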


\begin{proof}
Since the proof is based on a rather cumbersome but straightforward computation we will only indicate its main steps. First, we can easily prove that the multiplication defined by
\equref{brackunif} is commutative if and only if $f: V\times V \to
A$ and $\cdot_V : V\times V \to V$ are both symmetric maps, i.e.
(E1) holds. From now on we will assume that (E1) holds. Thus $A
\,\natural \, V$ is a Jordan algebra if and only if Jordan's
identity holds, i.e.:
\begin{equation}\eqlabel{005}
\{\bigl( (a, x) \circ (a, x) \bigl) \, \circ \, (b, y) \} \circ
(a, x) = \bigl( (a, x) \circ (a, x) \bigl) \, \circ \, \{(b, y)
\circ (a, x) \}
\end{equation}
for all $a$, $b \in A$ and $x$, $y \in V$. First, using \equref{001} we can easily notice that
\equref{005} holds for the pair $(a, 0)$, $(b, 0)$. Next, we can
prove that \equref{005} holds for $(a, 0)$, $(0, x)$ if and only
if (E2) and (E3) hold. Secondly, we can prove that \equref{005}
holds for $(0, x)$, $(a, 0)$ if and only if (E4) and (E5) hold.
Finally, \equref{005} holds for $(0, x)$, $(0, y)$ if and only if
(E6) and (E7) hold.

To conclude, the compatibility conditions
(E1)-(E7) are equivalent to the fact that the multiplication on $A \,\natural \, V$ is commutative and
the Jordan identity \equref{005} holds for all elements $\alpha$, $\beta \in (A\times \{0\}) \cup (\{0\} \times V)$.
Since $A \,\natural \, V = (A\times \{0\}) + (\{0\} \times V)$ we obtain, using \prref{missingrel}, that
$A \,\natural \, V$ is a Jordan algebra if and only if (E8) holds and the proof is finished.
\end{proof}

\begin{remark} \relabel{explicitlipsa}
The compatibilities in \equref{missingrel} are imposed by \reref{impbase} and axioms of this kind are not present in the case of Lie nor associative algebras (see \cite{am-2014, am-2016}). For this reason, they are called 'missing relations'.
\end{remark}

The second example of a unified product is the bicrossed product of
two Jordan algebras:

\begin{example} \exlabel{bicrossed}
Let $\Omega(A, \,  V) = \bigl(\triangleleft, \, \triangleright, \,
f, \, \cdot_V \bigl)$ be an extending datum of a Jordan algebra
$A$ through a vector space $V$ such that $f := 0$ is the trivial map. Then, using
\thref{1}, we obtain that $\Omega(A, \,  V) = \bigl(\triangleleft, \, \triangleright, \, f := 0, \, \cdot_V \bigl)$ is a
Jordan extending structure of $A$ through $V$ if and only if $(V, \,
\cdot_V)$ is a Jordan algebra (by (E1) and (E7)) and $(A, \, V, \, \triangleleft, \, \triangleright)$ is a \emph{matched pair} of Jordan algebras in the sense of \cite[Definition 2.1]{am-2022}.
\end{example}

Let $A$ be a Jordan algebra $A$ and $V$ a vector space. In the sequel, we shall denote by ${\mathcal J} {\mathcal E} {\mathcal S} \, (A, \, V)$ the set of
all Jordan extending structures of $A$ through $V$ (or, equivalently, all
systems $\Omega(A, \, V) = \bigl(\triangleleft, \, \triangleright,
\, f, \, \cdot_V \bigl)$ satisfying the compatibility conditions
(E1)-(E8) of \thref{1}). Note that the set  ${\mathcal J}{\mathcal E}
{\mathcal S} \, (A, \, V)$ contains at least one element, namely the extending
structure $\Omega (A, \, V) = \bigl(\triangleleft, \,
\triangleright, \, f, \, \cdot_V \bigl)$ consisting only of trivial maps and whose associated unified product is the direct product between $A$ and
the abelian Jordan algebra $V$.

Let $\Omega(A, \, V) = \bigl(\triangleleft, \, \triangleright, \,
f,\, \cdot_V \bigl) \, \in {\mathcal J} {\mathcal E} {\mathcal S}
\,  (A, \, V)$ be a Jordan extending structure and $A \,\natural
\, V$ the associated unified product. We have an injective Jordan algebra map defined as follows:
$$
i_{A}: A \to A \,\natural \, V, \qquad i_{A}(a) = (a, \, 0)
$$
which allows us to see $A$ as a
Jordan subalgebra of $A \,\natural \, V$ via the
identification $A \cong i_{A}(A) = A \times \{0\}$. Furthermore, the converse also holds: any Jordan algebra structure on a vector space $E$
containing $A$ as a Jordan subalgebra is isomorphic to a unified
product.

\begin{theorem}\thlabel{classif}
Let $A = (A, \cdot)$ be a Jordan algebra, $E$ a vector space and
$\cdot_E$ a Jordan algebra structure on $E$ containing $A$ as a
Jordan subalgebra. Then there exists a Jordan extending structure
$\Omega(A, \, V) = \bigl(\triangleleft, \, \triangleright, \, f,
\, \cdot_V \bigl)$ of $A$ through a subspace $V$ of $E$ and an
isomorphism of Jordan algebras $(E, \cdot_E) \cong A \,\natural \,
V$ that stabilizes $A$.
\end{theorem}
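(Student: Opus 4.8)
The plan is to exhibit $E$ as a unified product by choosing a complement of $A$ in $E$ and \emph{transporting} the multiplication of $E$ along the resulting linear isomorphism, thereby avoiding any direct verification of the axioms (E1)--(E8). First I would fix a linear complement $V$ of $A$ in $E$, so that $E = A \oplus V$ as vector spaces, and denote by $p \colon E \to A$ and $q \colon E \to V$ the corresponding projections. Out of these data I define four bilinear maps: for $x \in V$, $a \in A$ set $x \triangleright a := p(x \cdot_E a)$ and $x \triangleleft a := q(x \cdot_E a)$; for $x$, $y \in V$ set $f(x, y) := p(x \cdot_E y)$ and $x \cdot_V y := q(x \cdot_E y)$. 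This produces an extending datum $\Omega(A, \, V) = \bigl(\triangleleft, \, \triangleright, \, f, \, \cdot_V\bigr)$ of $A$ through $V$; at this point no claim is made that it is a Jordan extending structure.

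Next I consider the linear map $\varphi \colon A \,\natural\, V \to E$, $\varphi(a, x) := a + x$, the sum being computed inside $E$ via the inclusions $A \subseteq E$ and $V \subseteq E$. Since $E = A \oplus V$, the map $\varphi$ is a linear isomorphism satisfying $\varphi(a, 0) = a$ for all $a \in A$. The key step is the computation, for $a$, $b \in A$ and $x$, $y \in V$,
\[
(a + x) \cdot_E (b + y) = a \cdot b + x \cdot_E b + y \cdot_E a + x \cdot_E y ,
\]
where we used that $A$ is a subalgebra of $E$ (hence $a \cdot_E b = a \cdot b \in A$) and that $\cdot_E$ is commutative (to rewrite $a \cdot_E y$ as $y \cdot_E a$). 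Splitting each of $x \cdot_E b$, $y \cdot_E a$ and $x \cdot_E y$ into its $A$-component and its $V$-component through $p$ and $q$, and comparing with the formula \equref{brackunif} for $\circ$, one reads off immediately that $\varphi\bigl((a, x) \circ (b, y)\bigr) = \varphi(a, x) \cdot_E \varphi(b, y)$. Thus $\varphi$ is an isomorphism of commutative non-associative algebras.

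Finally, since $(E, \cdot_E)$ is a Jordan algebra and $\varphi$ is an algebra isomorphism, $A \,\natural\, V$ with the multiplication $\circ$ is a Jordan algebra; hence by \thref{1} the extending datum $\Omega(A, \, V)$ is in fact a Jordan extending structure of $A$ through $V$ and $A \,\natural\, V$ is a genuine unified product. The inverse $\varphi^{-1} \colon (E, \cdot_E) \to A \,\natural\, V$ is then an isomorphism of Jordan algebras, and it stabilizes $A$ because $\varphi^{-1}(a) = (a, 0) = i_A(a)$ for all $a \in A$, which is exactly what is required.

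I do not expect a genuine obstacle: the content of the argument is precisely that transporting $\cdot_E$ along the canonical linear isomorphism $\varphi$ lets us sidestep checking (E1)--(E8) by hand — in particular the cumbersome ``missing relations'' (E8) — and instead obtain them for free from \thref{1}. The only point requiring a little care is the consistent use of the commutativity of $\cdot_E$, so that the contribution of $a \cdot_E y$ is correctly identified with $y \triangleright a + y \triangleleft a$ rather than with some ``left'' action of $V$ on $A$, which the extending datum does not record.
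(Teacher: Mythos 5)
Your proposal is correct and follows essentially the same route as the paper: both transport $\cdot_E$ along the linear isomorphism $(a,x)\mapsto a+x$, identify the transported multiplication with \equref{brackunif}, and invoke \thref{1} to obtain the axioms (E1)--(E8) for free, concluding that the datum is a Jordan extending structure and that the isomorphism stabilizes $A$. The only cosmetic difference is that you start from a complement $V$ with its two projections $p$ and $q$, whereas the paper starts from a retraction $p$ and sets $V := {\rm Ker}(p)$; since $q = {\rm Id} - p$, the four bilinear maps you define coincide with the paper's \equref{bala1}--\equref{bala4}.
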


\begin{proof} Let $\cdot_E$ be Jordan algebra structure on $E$
containing $A$ as a Jordan subalgebra, i.e. $a\cdot_E b = a \cdot
b$, for all $a$, $b\in A$. Working over a field allows us to find a linear map $p: E \to A$ such that $p(a) = a$, for all $a
\in A$. Consequently, $V := {\rm Ker}(p)$ is a
complement of $A$ in $E$, i.e. $E = A \oplus V$. Using the
retraction $p$ we can now define the extending datum of $A$
through $V$ as follows:
\begin{eqnarray}
\triangleright = \triangleright_p : V \times A \to
A, \qquad x \triangleright a &:=& p (x \cdot_E a ) = p(a \cdot_E x ) \eqlabel{bala1}\\
\triangleleft = \triangleleft_p: V \times A \to V,
\qquad x \triangleleft a &:=& x \cdot_E a - p (x \cdot_E a) \eqlabel{bala2} \\
f = f_p: V \times V \to A, \qquad f(x, y) &:=&
p (x \cdot_E y)  \eqlabel{bala3} \\
\cdot_V = (\cdot_V)_p: V \times V \to V, \qquad x\cdot_V \, y &:=&
x\cdot_E \, y  - p (x \cdot_E \, y) \eqlabel{bala4}
\end{eqnarray}
for any $a \in A$ and $x$, $y\in V$. First of all, it is
straightforward to see that the above maps are well defined
bilinear maps: $x \triangleleft a \in V$ and $x\cdot_V \, y \in V
= {\rm Ker}(p)$, for all $x$, $y \in V$ and $a \in A$. We will
show that $\Omega(A, \, V) = \bigl(\triangleleft, \,
\triangleright, \, f, \, \cdot_V \bigl)$ is a Jordan extending
structure of $A$ through $V$ and $ \varphi: A \, \natural \, V \to
E$, $\varphi(a, x) := a + x$ is an isomorphism of Jordan algebras
that stabilizes $A$. The strategy we use, relaying on \thref{1},
is the following: $\varphi: A \times V \to E$, $\varphi(a, \, x)
:= a + x$ is a linear isomorphism between the Jordan algebra $E =
(E, \cdot_E)$ and the direct product of vector spaces $A \times V$
with the inverse given by $\varphi^{-1}(y) := \bigl(p(y), \, y -
p(y)\bigl)$, for all $y \in E$. Thus, there exists a unique Jordan
algebra structure on $A \times V$ such that $\varphi$ is an
isomorphism of Jordan algebras and this unique multiplication on
$A \times V$ is given for any $a$, $b \in A$ and $x$, $y\in V$ by:
$$
(a, x) \circ (b, y) := \varphi^{-1} \bigl( \varphi(a, x) \,
\cdot_E \,  \varphi(b, y) \bigl)
$$
We are now left to prove that the above multiplication coincides
with the one associated to the system $\bigl(\triangleleft_p, \,
\triangleright_p, \, f_p, \, (\cdot_V)_p \bigl)$ as defined by
\equref{brackunif}. Indeed, using intensively the commutativity of
$\cdot_E$, for any $a$, $b \in A$ and $x$, $y\in V$ we have:
\begin{eqnarray*}
(a, x) \circ (b, y) &=& \varphi^{-1} \bigl(\varphi(a, x) \,
\cdot_E \,  \varphi(b, y)\bigl) = \varphi^{-1} \bigl( (a+x)
\cdot_E (b+y) \bigl) \\
&=& \varphi^{-1} (a\cdot_E b + a\cdot_E y + x\cdot_E b + x\cdot_E
y) \\
&=& \bigl( p(a\cdot b) + p(a \cdot_E \, y ) + p (x \cdot_E \, b) +
p(x \cdot_E \, y), \, \\
&& a\cdot b + a\cdot_E y + x\cdot_E b + x\cdot_E y - p(a\cdot b) -
p(a \cdot_E \, y ) - \\
&& - p (x \cdot_E \, b) - p(x \cdot_E \, y)  \bigl)\\
&=& \Bigl(a \cdot b + x \triangleright b + y \triangleright a +
f(x, \, y), \,\,  x \triangleleft b + y \triangleleft a + x\cdot_V
\, y \Bigl)
\end{eqnarray*}
as desired. Moreover, the following diagram
\begin{eqnarray*}
\xymatrix {& A \ar[r]^{i_{A}} \ar[d]_{Id} &
{A \,\natural \, V} \ar[d]^{\varphi} \\
& A \ar[r]^{i} & {E} }
\end{eqnarray*}
is obviously commutative which shows that $\varphi$ stabilizes $A$ and this finishes the
proof.
\end{proof}

\thref{classif} shows that the classification of all Jordan algebra
structures on $E$ that contain $A$ as a subalgebra comes down to the
classification of the unified products $A \,\natural \, V$
for a given complement $V$ of $A$ in $E$. In order to describe the classifying sets ${\rm Jext} \, (E, A)$
introduced in \deref{echivextedn}, we need the following:

\begin{lemma} \lelabel{morfismuni}
Let $\Omega(A, \, V) = \bigl(\triangleleft, \, \triangleright, \,
f, \, \cdot_V \bigl)$ and $\Omega'(A, \, V) = \bigl(\triangleleft
', \, \triangleright ', \, f', \, \cdot_V^{'} \bigl)$ be two
Jordan algebra extending structures of a Jordan algebra $A = (A,
\cdot)$ through $V$ and $ A \,\natural \, V$, respectively $ A
\,\natural \, ' V$, the associated unified products. Then there
exists a bijection between the set of all morphisms of Jordan
algebras $\psi: A \,\natural \, V \to A \,\natural \, ' V$ which
stabilize $A$ and the set of pairs $(r, v)$, where $r: V \to A$,
$v: V \to V$ are two linear maps satisfying the following
compatibility conditions for any $a \in A$, $x$, $y \in V$:
\begin{enumerate}
\item[(M1)] $v(x) \triangleleft ' a = v(x \triangleleft a)$;
\item[(M2)] $ v(x) \triangleright ' a = r(x \triangleleft a) + x
\triangleright a - a \cdot r(x)$;

\item[(M3)] $v(x \cdot_V \, y) = v(x)\cdot_V ^{'} \, v(y) + v(x)
\triangleleft ' r(y) + v(y) \triangleleft ' r(x)$;

\item[(M4)] $r(x \cdot_V \, y) = r(x) \cdot r(y) + v(x)
\triangleright ' r(y) + v(y) \triangleright ' r(x) + f'
\bigl(v(x), v(y)\bigl) - f(x, y)$
\end{enumerate}
Under the above bijection the morphism of Jordan algebras $\psi =
\psi_{(r, v)}: A \,\natural \, V \to A \,\natural \, ' V$
corresponding to $(r, v)$ is given for any $a \in A$ and $x \in V$
by:
$$
\psi(a, \, x) = (a + r(x), \, v(x))
$$
Moreover, $\psi = \psi_{(r, v)}$ is an isomorphism if and only if
$v: V \to V$ is bijective and $\psi_{(r, v)}$ co-stabilizes $V$ if
and only if $v = {\rm Id}_V$, the identity of $V$.
\end{lemma}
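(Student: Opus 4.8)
The plan is to exploit that being a morphism of algebras is a \emph{bilinear} condition --- in sharp contrast with the Jordan identity --- so that it may be verified on a spanning set, after which the computation collapses precisely to (M1)--(M4). \textbf{First,} note that any linear map $\psi \colon A \,\natural\, V \to A \,\natural\, ' V$ with $\psi(a,0) = (a,0)$ for all $a \in A$ is determined by its restriction to $\{0\} \times V$: writing $\psi(0,x) = \bigl(r(x),\, v(x)\bigr)$ defines linear maps $r \colon V \to A$ and $v \colon V \to V$, and linearity forces $\psi(a,x) = (a + r(x),\, v(x))$; conversely every pair $(r,v)$ of such linear maps yields a linear map $\psi = \psi_{(r,v)}$ stabilising $A$. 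This already produces a bijection between linear maps stabilising $A$ and pairs $(r,v)$, so it remains to match, under this correspondence, the morphisms of Jordan algebras with the pairs satisfying (M1)--(M4).

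\textbf{Second,} since $\circ$ and $\circ'$ are bilinear and $\psi := \psi_{(r,v)}$ is linear, the assignment $(X,Y) \mapsto \psi(X \circ Y) - \psi(X) \circ' \psi(Y)$ is bilinear in $(X,Y)$; as $A \,\natural\, V$ and $A \,\natural\, ' V$ are already Jordan algebras by \thref{1}, $\psi$ is a morphism of Jordan algebras if and only if it is multiplicative on the spanning set $(A \times \{0\}) \cup (\{0\} \times V)$ of $A \,\natural\, V$. I would then run through the cases using the reduced products \equref{001} and \equref{002}: on the pair $(a,0),(b,0)$ both sides equal $(a \cdot b, 0)$, so nothing is imposed; on $(a,0),(0,x)$, comparing the $A$-component and the $V$-component of $\psi\bigl((a,0)\circ(0,x)\bigr)$ with those of $(a,0)\circ'\bigl(r(x),v(x)\bigr)$ yields exactly (M2) and (M1); on $(0,x),(0,y)$, comparing the two components of $\psi\bigl((0,x)\circ(0,y)\bigr)$ with those of $\bigl(r(x),v(x)\bigr)\circ'\bigl(r(y),v(y)\bigr)$ yields exactly (M4) and (M3); and the case $(0,x),(a,0)$ is redundant by commutativity of both products. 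This establishes the bijection, with $\psi_{(r,v)}(a,x) = (a + r(x), v(x))$ as asserted.

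\textbf{Third,} since a bijective morphism of Jordan algebras is an isomorphism, $\psi := \psi_{(r,v)}$ is an isomorphism exactly when it is bijective; with respect to the vector-space decomposition $A \times V$ it is ``block upper triangular'' with diagonal blocks ${\rm Id}_A$ and $v$, and one checks directly that $\psi$ is bijective if and only if $v$ is: if $v(x) = 0$ then $\psi(-r(x), x) = (0,0)$, so injectivity of $\psi$ forces $x = 0$ (hence $v$ injective); surjectivity of $\psi$ forces $v$ surjective; and conversely, when $v$ is invertible, $(b,y) \mapsto \bigl(b - r(v^{-1}(y)),\, v^{-1}(y)\bigr)$ is a two-sided inverse of $\psi$. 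Finally, by the definition of co-stabilising in \deref{echivextedna} applied to the canonical projections $\pi_V \colon A \,\natural\, V \to V$, $(a,x) \mapsto x$, the map $\psi$ co-stabilises $V$ if and only if $x = \pi_V\bigl(\psi(a,x)\bigr) = v(x)$ for all $x \in V$, i.e. if and only if $v = {\rm Id}_V$.

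\textbf{Expected difficulty.} There is no real obstacle here: the case analysis in the second step is a lengthy but purely mechanical computation once \equref{001}--\equref{002} are in hand. The single point deserving emphasis --- and the reason this lemma is so much lighter than \thref{1} --- is the bilinearity observation: morphism-hood is linear in each argument, so testing it on generators genuinely suffices, and no analogue of the ``missing relations'' (E8) of \thref{1} intervenes.
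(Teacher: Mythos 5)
Your proposal is correct and follows essentially the same route as the paper: decompose any $A$-stabilizing linear map as $\psi(a,x)=(a+r(x),v(x))$, check multiplicativity on the generators $(A\times\{0\})\cup(\{0\}\times V)$ using \equref{001}--\equref{002} to extract (M1)--(M4), and exhibit the explicit inverse $(b,y)\mapsto(b-r(v^{-1}(y)),v^{-1}(y))$ when $v$ is bijective. Your explicit remark that multiplicativity is a bilinear condition (so checking on a spanning set genuinely suffices, unlike the Jordan identity in \thref{1}) is a useful clarification of a point the paper leaves implicit.
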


\begin{proof}
To start with, it can be easily seen that if a linear map $\psi: A \,\natural \, V \to A \,\natural \, ' V$ makes the following diagram commutative:
$$
\xymatrix {& {A} \ar[r]^{i_{A}} \ar[d]_{Id_{A}}
& {A \,\natural \, V}\ar[d]^{\psi}\\
& {A} \ar[r]^{i_{A}} & {A \,\natural \, ' V}}
$$
then there exists two linear maps $r: V \to A$, $v: V \to
V$ such that $\psi(a, x) = (a + r(x), v(x))$, for all $a \in A$,
and $x \in V$.

Let $\psi = \psi_{(r, v)}$ be such a linear map,
i.e. $\psi(a, x) = (a + r(x), v(x))$, for some linear maps $r: V
\to A$, $v: V \to V$. We will prove that $\psi$ is a morphism of
Jordan algebras if and only if the compatibility conditions
(M1)-(M4) hold. To this end, it is enough to prove that the
compatibility
\begin{equation}\eqlabel{Liemap}
\psi \bigl((a, x) \circ (b, y) \bigl) = \psi(a, x) \circ' \psi(b,
y)
\end{equation}
holds on all generators of $A \,\natural \, V$. We leave out the detailed computations and only indicate the main steps of this verification. First, it is easy to see that \equref{Liemap} holds for the pair $(a, 0)$, $(b, 0)$, for all $a$, $b \in A$. Secondly, we
can prove that \equref{Liemap} holds for the pair $(a, 0)$, $(0,
x)$ if and only if (M1) and (M2) hold. Finally, \equref{Liemap}
holds for the pair $(0, x)$, $(0, y)$ if and only if (M3) and (M4)
hold. The last statement is elementary: we just note that if $v: V
\to V$ is bijective, then $\psi_{(r, v)}$ is an isomorphism of
Jordan algebras with the inverse given for any $b \in A$ and $y
\in V$ by:
$$
\psi_{(r, v)}^{-1}(b, \, y) = \bigl(b - r(v^{-1}(y)), \,
v^{-1}(y)\bigl)
$$
This finishes the proof.
\end{proof}

Arising from \leref{morfismuni} the following concept will be used for the classification of unified products:

\begin{definition}\delabel{echiaa}
Let $A$ be a Jordan algebra and $V$ a vector space. Two Jordan
algebra extending structures of $A$ by $V$, $\Omega(A, \, V) =
\bigl(\triangleleft, \, \triangleright, \, f, \, \cdot_V \bigl)$
and $\Omega'(A, \, V) = \bigl(\triangleleft ', \, \triangleright
', \, f', \, \cdot_V^{'} \bigl)$ are called \emph{equivalent}, and
we denote this by $\Omega(A, \, V) \equiv \Omega'(A, \, V)$, if
there exists a pair of linear maps $(r, v)$, where $r: V \to A$
and $v \in {\rm Aut}_{k}(V)$ such that $\bigl(\triangleleft ', \,
\triangleright ', \, f', \, \cdot_V^{'} \bigl)$ is defined via
$\bigl(\triangleleft, \, \triangleright, \, f, \, \cdot_V \bigl)$
using $(r, v)$ as follows:
\begin{eqnarray*}
x \triangleleft ' a &=& v \bigl(v^{-1}(x) \triangleleft a\bigl)  \\
x \triangleright ' a &=& r \bigl(v^{-1}(x) \triangleleft a\bigl) +
\, v^{-1}(x) \triangleright a - a \cdot r\bigl(v^{-1}(x)\bigl)\\
f'(x, \, y) &=& f \bigl(v^{-1}(x), \, v^{-1}(y)\bigl) +
\, r \bigl(v^{-1}(x) \, \cdot_V \, v^{-1}(y)\bigl) + \, r\bigl(v^{-1}(x)\bigl) \, \cdot \, r\bigl(v^{-1}(y)\bigl)\\
&& - \, r\bigl(v^{-1}(x) \triangleleft r
\bigl(v^{-1}(y)\bigl)\bigl) - \, v^{-1}(x) \triangleright r
\bigl(v^{-1}(y)\bigl) - \, r\bigl(v^{-1}(y) \triangleleft r \bigl(v^{-1}(x)\bigl)\bigl)\\
&&  - \, v^{-1}(y) \triangleright r \bigl(v^{-1}(x)\bigl) \\
x \cdot_V^{'} y &=& v \bigl( v^{-1}(x) \, \cdot_V \, v^{-1}(y)
\bigl) - \, v \bigl(v^{-1}(x) \triangleleft r
\bigl(v^{-1}(y)\bigl)\bigl)
 - \, v \bigl(v^{-1}(y) \triangleleft r \bigl(v^{-1}(x)\bigl)\bigl)
\end{eqnarray*}
for all $a \in A$, $x$, $y \in V$.
\end{definition}

By putting together all the results proved in this section we obtain the following theoretical answer to the extending structures
problem for Jordan algebras:

\begin{theorem}\thlabel{main1}
Let $A$ be a Jordan algebra, $E$ a vector space that contains $A$
as a subspace and $V$ a complement of $A$ in $E$. Then:

$(1)$ $\equiv$ from \deref{echiaa} is an equivalence
relation on the set $ {\mathcal J} {\mathcal E} {\mathcal S} (A,
\, V)$ of all Jordan extending structures of $A$ through $V$. We
denote by ${\mathcal J}_{A} \, (V, \, A ) := {\mathcal J}
{\mathcal E} {\mathcal S}  (A, \, V)/ \equiv $, the quotient set.

$(2)$ The map
$$
{\mathcal J}_{A} \, (V, \, A) \to {\rm Jext} \, (E, \, A), \qquad
\overline{(\triangleleft, \triangleright, f, \, \cdot_V )}
\rightarrow \bigl(A \,\natural \, V, \, \circ \bigl)
$$
is bijective, where $\overline{(\triangleleft, \triangleright, f,
\cdot_V)}$ is the equivalence class of $(\triangleleft,
\triangleright, f, \cdot_V)$ via $\equiv$.
\end{theorem}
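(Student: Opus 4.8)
The plan is to assemble \thref{main1} from the three structural results already in place: \thref{1} (which characterises Jordan extending structures), \thref{classif} (which shows every Jordan algebra structure on $E$ containing $A$ is a unified product), and \leref{morfismuni} (which describes the stabilising morphisms between two unified products). The first task is to verify part $(1)$, that $\equiv$ from \deref{echiaa} is genuinely an equivalence relation on ${\mathcal J}{\mathcal E}{\mathcal S}(A,\,V)$. Rather than checking reflexivity, symmetry and transitivity directly from the unwieldy formulas in \deref{echiaa}, I would reinterpret $\equiv$ through \leref{morfismuni}: by that lemma, $\Omega(A,\,V)\equiv\Omega'(A,\,V)$ precisely when there is a pair $(r,v)$ with $v\in{\rm Aut}_k(V)$ giving an isomorphism of Jordan algebras $\psi_{(r,v)}:A\,\natural\,V\to A\,\natural\,'V$ that stabilises $A$ — indeed, the transport-of-structure formulas in \deref{echiaa} are exactly the statement that $\Omega'$ is the extending structure obtained by pushing $\Omega$ across $\psi_{(r,v)}$. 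Reflexivity then corresponds to $(0,{\rm Id}_V)$, symmetry to the inverse isomorphism $\psi_{(r,v)}^{-1}$ (whose formula is recorded in \leref{morfismuni}, and which again stabilises $A$ and has bijective second component), and transitivity to the composite of two such stabilising isomorphisms, which is again a stabilising isomorphism with bijective second component. So $(1)$ reduces to the (easy) observation that stabilising isomorphisms between unified products are closed under identity, inverse and composition, together with the bijective correspondence of \leref{morfismuni}.

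For part $(2)$ I would first check the map is well defined: if $\Omega(A,\,V)\equiv\Omega'(A,\,V)$ then by the reading above there is a stabilising Jordan algebra isomorphism $A\,\natural\,V\cong A\,\natural\,'V$, and since the identification $A\cong A\times\{0\}$ inside each unified product is compatible with $i_A$, composing with $\varphi$ of \thref{classif} (or simply noting both are Jordan structures on $E$ identified along $i_A$) shows the two unified products represent the same class in ${\rm Jext}(E,\,A)$. Surjectivity is precisely \thref{classif}: any Jordan algebra structure $\cdot_E$ on $E$ containing $A$ as a subalgebra is, after choosing a retraction $p:E\to A$ with $\Ker p$ a complement, isomorphic via an $A$-stabilising map to a unified product $A\,\natural\,V$; since all complements of $A$ in $E$ are linearly isomorphic over $A$, one may take $V$ to be the fixed complement, and the resulting extending structure lies in ${\mathcal J}{\mathcal E}{\mathcal S}(A,\,V)$, hitting the given class.

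Injectivity is the crux. Suppose $\Omega(A,\,V)$ and $\Omega'(A,\,V)$ are Jordan extending structures whose unified products $A\,\natural\,V$ and $A\,\natural\,'V$ are equivalent in ${\rm Jext}(E,\,A)$, i.e. there is a Jordan algebra isomorphism $\theta:A\,\natural\,V\to A\,\natural\,'V$ with $\theta|_A={\rm Id}_A$ (under the identification $A=A\times\{0\}$ in both). I want to conclude $\Omega(A,\,V)\equiv\Omega'(A,\,V)$. Since $\theta$ stabilises $A$, the argument at the start of the proof of \leref{morfismuni} shows $\theta$ must have the form $\theta(a,x)=(a+r(x),\,v(x))$ for linear maps $r:V\to A$ and $v:V\to V$; as $\theta$ is bijective and fixes $A\times\{0\}$ pointwise, $v$ is forced to be bijective, i.e. $v\in{\rm Aut}_k(V)$. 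By \leref{morfismuni} the pair $(r,v)$ then satisfies (M1)–(M4), and rearranging (M1)–(M4) to solve for the primed operations $\triangleleft',\triangleright',f',\cdot_V'$ in terms of the unprimed ones and $(r,v)$ yields exactly the formulas of \deref{echiaa} (this is where the $v^{-1}$'s appear — one substitutes $x\mapsto v^{-1}(x)$, etc.). Hence $\Omega(A,\,V)\equiv\Omega'(A,\,V)$, proving injectivity. The main obstacle I anticipate is bookkeeping rather than conceptual: one must be careful that the identification of $A$ inside $A\,\natural\,V$ via $i_A$ is matched consistently on both sides so that ``stabilises $A$'' means the same thing for $\theta$ as for the maps in \leref{morfismuni}, and that solving (M1)–(M4) for the primed structure reproduces \deref{echiaa} verbatim (in particular that (M2) and (M4), after the $v^{-1}$ substitution, give precisely the stated expressions for $\triangleright'$ and $f'$). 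Once the dictionary ``equivalence of extending structures $\leftrightarrow$ existence of an $A$-stabilising isomorphism of unified products with bijective second component'' is set up cleanly via \leref{morfismuni}, both parts of \thref{main1} follow formally.
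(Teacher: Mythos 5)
Your proposal is correct and follows essentially the same route as the paper: the paper's proof likewise observes that $\Omega(A,V)\equiv\Omega'(A,V)$ in the sense of \deref{echiaa} holds precisely when there is an $A$-stabilizing Jordan algebra isomorphism $A\,\natural\,V\to A\,\natural\,'V$ (via \leref{morfismuni}), whence $\equiv$ is an equivalence relation, and then deduces the bijection from \thref{1}, \thref{classif} and \leref{morfismuni}. Your write-up merely makes explicit the well-definedness, surjectivity and injectivity bookkeeping that the paper leaves implicit.
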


\begin{proof} We observe that $\Omega(A, \,  V) \equiv \Omega'(A, \, V)$
in the sense of \deref{echiaa} if and only if there exists an
isomorphism of Jordan algebras $\psi: A \,\natural \, V \to A
\,\natural \, ' V$ which stabilizes $A$. Therefore, $\equiv$ is an
equivalence relation on the set ${\mathcal J} {\mathcal
E}{\mathcal S} (A, \, V)$ of all Jordan algebra extending
structures $\Omega(A, \, V)$. Now, the conclusion follows from
\thref{1}, \thref{classif} and \leref{morfismuni}.
\end{proof}

The main application of the results proven in this section will be
given in the next section. Here we consider only two of them:
the first one refers to the classification of a
special class of finite dimensional Jordan algebras while the
second one to a possible further development of the
\emph{invariant theory} for Jordan algebras.

\subsection*{Application: flag Jordan algebras}

This section deals with the following special case of extending structures:

\begin{definition} \delabel{flagex}
Let $A$ be a Jordan algebra and $E$ a vector space containing $A$
as a subspace. A Jordan algebra structure on $E$ such that $A$ is
a subalgebra is called a \emph{flag extending structure} of $A$ to
$E$ if there exists a finite chain of subalgebras of $E$
\begin{equation} \eqlabel{lant}
E_0 := A  \subset E_1 \subset \cdots \subset E_m = E
\end{equation}
such that $E_i$ has codimension $1$ in $E_{i+1}$, for all $i = 0,
\cdots, m-1$. A finite dimensional Jordan algebra $E$ is called
\emph{flag} if it is a flag extending structure of $\{0\}$.
\end{definition}

The flag extending structures of $A$ to $E$ can be obtained by following the method described below. We start by describing and classifying all unified products $A \,\natural \, V_1$, for a $1$-dimensional vector space $V_1$. The next natural step consists in describing and classifying all unified products between the unified products previously described and a $1$-dimensional vector space. Continuing this process will render all flag extending structures of $A$ to $E$ after ${\rm dim}_k
(V)$ steps.

The following concept will be useful for our approach:

\begin{definition}\delabel{lambdaderivariii}
Let $A$ be a Jordan algebra and $V$ a vector space of dimension
$1$ with basis $\{x\}$. A \emph{flag datum} of $A$ is a system $(D, \,
\lambda, \,  a_0, \, \alpha_0) \in {\rm End}_k (A) \times A^*
\times A \times k$ such that $A_{(D, \, \lambda, \,  a_0, \, \alpha_0)} := A \,\natural \, kx$
with the multiplication given for all $a$, $b\in A$ by:
\begin{equation}\eqlabel{extenddim2022}
(a, x) \circ (b, x) = \bigl(a\cdot b + D(a + b) + a_0, \,\,\,
(\lambda(a + b) + \alpha_0) \, x \, \bigl)
\end{equation}
is a Jordan algebra. The set of all flag data of $A$ will be denoted by ${\mathcal F} (A)$.
\end{definition}

Explicitly, if $\{e_i \, | \, i\in I\}$ is a $k$-basis of a Jordan algebra $A$ then
$A_{(D, \, \lambda, \, a_0, \, \alpha_0)}$ is the commutative algebra
having $\{x, \, e_i \, | \, i\in I\}$ as a $k$-basis and the
multiplication given for all $i$, $j\in I$ by:
\begin{equation}\eqlabel{extenddim20}
e_i \circ e_j := e_i \cdot e_j, \quad e_i \circ x := x \circ e_i := D(e_i)
+ \lambda(e_i) \, x, \quad x \circ x := a_0 + \alpha_0 \, x
\end{equation}

The set of all Jordan extending structures ${\mathcal J} {\mathcal
E} {\mathcal S} \, (A, \, V)$ of a Jordan algebra $A$ through a
$1$-dimensional vector space $V$ is parameterized by ${\mathcal F}
(A)$.

\begin{proposition}\prlabel{unifdim1}
Let $A$ be a Jordan algebra and $V$ a vector space of dimension
$1$ with basis $\{x\}$. Then there exists a bijection between the
set ${\mathcal J} {\mathcal E} {\mathcal S} \, (A, \, V)$ of all
Jordan extending structures of $A$ through $V$ and the set
${\mathcal F} (A)$ of all flag data of $A$. Through the above
bijection, the Jordan extending structure $\Omega(A, \, V)  =
\bigl(\triangleleft, \, \triangleright, f, \cdot_V \bigl)$
corresponding to $(D, \, \lambda, \,  a_0, \, \alpha_0) \in
{\mathcal F} (A)$ is given for all $a \in A$ by:
\begin{equation}\eqlabel{extenddim1}
x \triangleleft a = \lambda (a) x, \quad x \triangleright a =
D(a), \quad f (x, x) = a_0, \quad x \cdot_V \, x = \alpha_0 \, x
\end{equation}
\end{proposition}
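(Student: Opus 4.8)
The plan is to specialize \thref{1} to the case ${\rm dim}_k (V) = 1$ and to observe that, in this situation, an extending datum of $A$ through $V$ is nothing more than a reparametrization of a flag datum, so that the proposition becomes essentially a dictionary entry.

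First I would record the elementary fact that when $V = kx$ is one-dimensional, every bilinear map having $V$ in at least one of its arguments is completely determined by its value(s) on the basis vector $x$. Concretely, a map $\triangleleft : V \times A \to V$ must have the form $x \triangleleft a = \lambda(a)\, x$ for a unique linear functional $\lambda \in A^*$; a map $\triangleright : V \times A \to A$ must have the form $x \triangleright a = D(a)$ for a unique $D \in {\rm End}_k (A)$; a bilinear map $f : V \times V \to A$ is determined by the single element $a_0 := f(x, x) \in A$; and $\cdot_V : V \times V \to V$ is determined by the scalar $\alpha_0 \in k$ with $x \cdot_V x = \alpha_0\, x$. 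Conversely, any quadruple $(D, \lambda, a_0, \alpha_0) \in {\rm End}_k (A) \times A^* \times A \times k$ produces such bilinear maps through \equref{extenddim1}. This yields a bijection between the set of all extending data of $A$ through $V$ and ${\rm End}_k (A) \times A^* \times A \times k$.

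Next I would compute the unified product bracket \equref{brackunif} attached to an extending datum of this shape. Since the multiplication $\circ$ is bilinear and $A \natural V = (A \times \{0\}) + (\{0\} \times V)$ with $V = kx$, it suffices to evaluate it on $(a, 0) \circ (b, 0)$, $(a, 0) \circ (0, x)$ and $(0, x) \circ (0, x)$; substituting \equref{extenddim1} into \equref{brackunif} (and using the relations \equref{001} and \equref{002}) one recovers precisely the multiplication table \equref{extenddim20}, equivalently the formula \equref{extenddim2022} for $(a, x) \circ (b, x)$. Hence the algebra $A \natural V$ associated with the datum corresponding to $(D, \lambda, a_0, \alpha_0)$ is literally the algebra $A_{(D, \lambda, a_0, \alpha_0)}$ of \deref{lambdaderivariii}.

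To conclude, I would invoke \thref{1}: an extending datum lies in ${\mathcal J}{\mathcal E}{\mathcal S}\,(A, \, V)$ if and only if its associated unified product $A \natural V$ is a Jordan algebra, and by the previous step this holds exactly when $A_{(D, \lambda, a_0, \alpha_0)}$ is a Jordan algebra, i.e. exactly when $(D, \lambda, a_0, \alpha_0) \in {\mathcal F}(A)$. Thus the bijection of the first step restricts to the asserted bijection ${\mathcal J}{\mathcal E}{\mathcal S}\,(A, \, V) \cong {\mathcal F}(A)$, given explicitly on elements by \equref{extenddim1}. I do not expect a genuine obstacle here, since all the real work is already contained in \thref{1}; the only point demanding a little care is the bookkeeping that matches \equref{brackunif} with \equref{extenddim2022}---in particular keeping track of the symmetrizations $D(a + b) = x \triangleright a + x \triangleright b$ and $\bigl(\lambda(a + b) + \alpha_0\bigr)\, x = x \triangleleft a + x \triangleleft b + x \cdot_V x$---together with the observation that condition (E1) (symmetry of $f$ and $\cdot_V$) is automatic when ${\rm dim}_k (V) = 1$ and therefore imposes no restriction on the quadruple $(D, \lambda, a_0, \alpha_0)$.
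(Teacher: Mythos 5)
Your proposal is correct and follows essentially the same route as the paper: parametrize all extending data by quadruples $(D, \lambda, a_0, \alpha_0)$ using the one-dimensionality of $V$, check that \equref{brackunif} specializes to \equref{extenddim2022}, and invoke \thref{1} to identify the Jordan extending structures with the flag data. The additional remark that the symmetry condition (E1) is automatic when $\dim_k(V)=1$ is a correct (and harmless) refinement of the same argument.
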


\begin{proof}
Since $V := kx$ has dimension $1$, the set of all bilinear maps
$\triangleleft : V \times A \to V$, $\triangleright : V \times A
\to A$, $f: V\times V \to A$ and $\cdot_V : V\times V \to V$ is in
bijection with the set of all systems $(D, \, \lambda, \, a_0, \,
\alpha_0) \in {\rm End}_k (A) \times A^* \times A \times k$ and
the bijection is given such that \equref{extenddim1} holds. The
proof is now finised using \thref{1} once we observe that the multiplication given
by \equref{extenddim2022} coincides with the one of the unified product
\equref{brackunif} associated to the extending datum defined by \equref{extenddim1}.
\end{proof}

\begin{remark}\relabel{unifbicross}
The compatibility conditions of a flag datum $(D, \, \lambda, \,  a_0, \, \alpha_0) \in
{\mathcal F} (A)$ of a Jordan algebra $A$ as defined in \deref{lambdaderivariii} can be written down explicitly using \thref{1} and \prref{unifdim1} by a straightforward computation. For instance, the axioms (E1)-(E7) from \thref{1}
are equivalent to the following five compatibility conditions:
\begin{eqnarray*}
&& a\cdot D(a^2) + \lambda(a^2) \, D(a) = a^2 \cdot D(a) + \lambda(a) D(a^2), \\
&& D(a_0 \cdot a) = a_0 \cdot D(a) + \lambda(a) D(a_0), \quad D^2 (a_0) + \lambda(a_0) \, a_0 = a_0^2 + \alpha_0 \,  D(a_0), \\
&& \lambda (a_0 \cdot a) = \lambda (a_0) \, \lambda(a), \quad \lambda(D(a_0)) = 0
\end{eqnarray*}
for all $a\in A$. Two more complicated compatibilities which we do not write down arise from
the missing relations \equref{missingrel}.
\end{remark}

Using \thref{classif} and \prref{unifdim1} we obtain:

\begin{corollary}\colabel{balam}
Let $A$ be a Jordan algebra. Then a Jordan algebra $E$ contains
$A$ as a subalgebra of codimension $1$ if and only if there exists
$(D, \, \lambda, \,  a_0, \, \alpha_0) \in {\mathcal F} (A)$ a
flag datum of $A$ such that $E \cong A_{(D, \, \lambda, \,  a_0,
\, \alpha_0)}$.
\end{corollary}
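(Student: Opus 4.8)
The plan is to deduce \coref{balam} directly from \thref{classif} together with \prref{unifdim1}, which between them reduce the codimension-one instance of the extending structures problem to the combinatorial data packaged in $\mathcal{F}(A)$; no new computation is needed.

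First I would treat the ``only if'' direction. Suppose $E$ is a Jordan algebra containing $A$ as a subalgebra with $\dim_k(E/A) = 1$. Arguing as in the proof of \thref{classif}, choose a linear retraction $p : E \to A$ with $p|_A = \mathrm{id}_A$; then $V := \Ker(p)$ is a complement of $A$ in $E$ with $\dim_k(V) = 1$, and \thref{classif} furnishes a Jordan extending structure $\Omega(A, V) = (\triangleleft, \triangleright, f, \cdot_V)$ and an isomorphism of Jordan algebras $E \cong A \,\natural\, V$ stabilizing $A$. Since $V$ is one-dimensional, \prref{unifdim1} identifies $\Omega(A, V)$ with a flag datum $(D, \lambda, a_0, \alpha_0) \in \mathcal{F}(A)$ via \equref{extenddim1}, and the unified product built from $\Omega(A, V)$ through \equref{brackunif} is, by the last observation in the proof of \prref{unifdim1}, literally $A_{(D, \lambda, a_0, \alpha_0)}$. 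Composing the two isomorphisms gives $E \cong A_{(D, \lambda, a_0, \alpha_0)}$, as required.

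For the ``if'' direction, suppose $E \cong A_{(D, \lambda, a_0, \alpha_0)}$ for some $(D, \lambda, a_0, \alpha_0) \in \mathcal{F}(A)$. By \deref{lambdaderivariii}, $A_{(D, \lambda, a_0, \alpha_0)} = A \,\natural\, kx$ is a Jordan algebra with multiplication \equref{extenddim2022}, and the canonical map $i_A : A \to A \,\natural\, kx$, $a \mapsto (a, 0)$ is an injective Jordan algebra morphism exhibiting $A$ as a subalgebra whose complement $kx$ is one-dimensional; hence $A$ sits inside it in codimension $1$. Transporting this along the isomorphism $E \cong A_{(D, \lambda, a_0, \alpha_0)}$ shows that $E$ contains an isomorphic copy of $A$ as a codimension-one subalgebra, finishing the proof. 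The only point deserving a moment's care is the identification of the multiplications \equref{brackunif} and \equref{extenddim2022} under the correspondence \equref{extenddim1}, but this was already recorded in \prref{unifdim1} and can simply be quoted; so there is no genuine obstacle here, the corollary being a bookkeeping consequence of the two cited results.
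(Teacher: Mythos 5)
Your proposal is correct and follows exactly the route the paper intends: the corollary is stated there as an immediate consequence of \thref{classif} and \prref{unifdim1}, and your argument simply spells out that deduction in both directions. Nothing further is needed.
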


Applying the equivalence relation in \deref{echiaa} for ${\mathcal F} (A)$ comes down to the following:

\begin{definition} \delabel{echivtwderivari}
Two flag data $(D, \, \lambda, \,  a_0, \, \alpha_0)$ and $(D', \,
\lambda', \,  a_0', \, \alpha_0') \in {\mathcal F} (A)$ of a
Jordan algebra $A$ are called \emph{equivalent} and we denote this
by $(D, \, \lambda, \,  a_0, \, \alpha_0) \equiv (D', \, \lambda',
\, a_0', \, \alpha_0')$ if $\lambda = \lambda'$ and there exists a
pair $(r, \, u) \in A \times k^*$ such that for all $a \in A$ we
have:
\begin{eqnarray}
D(a) &=& u \, D'(a) + a \cdot r - \lambda'(a) \, r \eqlabel{lzeci3a} \\
\alpha_0 &=& u \, \alpha_0' + 2 \lambda'(r) \eqlabel{lzeci2b} \\
a_0 &=& u^2 \, a'_0 + r \cdot r + 2 u \, D' (r) - u \, \alpha_0'
\, r - 2 \lambda'(r) \, r \eqlabel{lzeci1c}
\end{eqnarray}
\end{definition}

We can now provide the classification of all Jordan algebras $A_{(D, \, \lambda, \, a_0,
\, \alpha_0)}$. This will be done by computing ${\mathcal
J}_{A} (V, \, A)$ for a $1$-dimensional vector space $V$ and constitutes the main step in classifying flag Jordan algebras.

The following can be obtained as a consequence of all the above together:

\begin{theorem}\thlabel{clasdim1}
Let $A$ be a Jordan algebra of codimension $1$ in the vector space
$E$ and $V$ a complement of $A$ in $E$. Then, $\equiv$ from
\deref{echivtwderivari} is an equivalence relation on the set
${\mathcal F} (A)$ of all flag data of $A$ and
$$
{\rm Jext} \, (E, \, A) \cong {\mathcal J}_{A} (V, \, A) \cong
{\mathcal F} (A)/\equiv
$$
The bijection between ${\mathcal F} (A)/\equiv$ and ${\rm Jext} \,
(E, \, A)$ is given by:
$$
\overline{ (D, \, \lambda, \,  a_0, \, \alpha_0) } \mapsto A_{(D,
\, \lambda, \, a_0, \, \alpha_0)}
$$
where $\overline{ (D, \, \lambda, \,  a_0, \, \alpha_0) }$ is the
equivalence class of $(D, \, \lambda, \,  a_0, \, \alpha_0)$ via
$\equiv$ and $A_{(D, \, \lambda, \, a_0, \, \alpha_0)}$ is the
Jordan algebra constructed in \equref{extenddim2022}.
\end{theorem}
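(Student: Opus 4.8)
The plan is to deduce \thref{clasdim1} from \thref{main1} by transporting the equivalence relation of \deref{echiaa} along the bijection of \prref{unifdim1}. Write $\Phi \colon {\mathcal F}(A) \to {\mathcal J}{\mathcal E}{\mathcal S}(A, V)$ for the bijection furnished by \prref{unifdim1}, which sends a flag datum $(D, \lambda, a_0, \alpha_0)$ to the Jordan extending structure $(\triangleleft, \triangleright, f, \cdot_V)$ prescribed by \equref{extenddim1}. By \thref{main1}, $\equiv$ from \deref{echiaa} is an equivalence relation on ${\mathcal J}{\mathcal E}{\mathcal S}(A, V)$, its quotient is ${\mathcal J}_{A}(V, A)$, and $\overline{(\triangleleft, \triangleright, f, \cdot_V)} \mapsto (A \,\natural\, V, \circ)$ is a bijection onto ${\rm Jext}(E, A)$. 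Since a bijection pulls an equivalence relation back to an equivalence relation and descends to a bijection on quotients, the whole statement reduces to one claim: \emph{under $\Phi$, the relation of \deref{echiaa} pulls back to precisely the relation of \deref{echivtwderivari} on ${\mathcal F}(A)$}. Granting this, $\Phi$ descends to a bijection ${\mathcal F}(A)/\equiv \;\cong\; {\mathcal J}_{A}(V, A)$ which, composed with the bijection of \thref{main1}(2), gives ${\mathcal F}(A)/\equiv \;\cong\; {\rm Jext}(E, A)$; its explicit form is read off from the proof of \prref{unifdim1}, where it is checked that the unified product attached to $\Phi(D, \lambda, a_0, \alpha_0)$ carries exactly the multiplication \equref{extenddim2022}, i.e.\ it is $A_{(D, \lambda, a_0, \alpha_0)}$.

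To prove the claim, fix the basis $\{x\}$ of $V$. A pair $(r, v)$ as in \deref{echiaa} --- with $r \colon V \to A$ linear and $v \in {\rm Aut}_{k}(V)$ --- is the same datum as a pair $(r, u) \in A \times k^{*}$, via $r := r(x)$ and $v(x) = u\, x$ for a unique $u \in k^{*}$, so that $v^{-1}(x) = u^{-1} x$. Given $\Omega(A, V) = \Phi(D, \lambda, a_0, \alpha_0)$ and $\Omega'(A, V) = \Phi(D', \lambda', a_0', \alpha_0')$, I would substitute \equref{extenddim1} into the four defining identities of \deref{echiaa}, evaluate them on the basis vector $x$ (and on the pair $(x, x)$ for the cocycle identity), and use throughout that $x \triangleleft a = \lambda(a)\, x$, $x \triangleright a = D(a)$, $f(x, x) = a_0$, $x \cdot_V x = \alpha_0\, x$ together with bilinearity. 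The identity for $\triangleleft'$ collapses to $\lambda'(a)\, x = \lambda(a)\, x$, i.e.\ $\lambda = \lambda'$; the one for $\triangleright'$ gives $D'(a) = u^{-1}\bigl(D(a) + \lambda(a)\, r - a \cdot r\bigr)$, which after clearing $u$ and using $\lambda = \lambda'$ is \equref{lzeci3a}; the one for $\cdot_V'$ gives $\alpha_0'\, x = u^{-1}\bigl(\alpha_0 - 2\lambda(r)\bigr) x$, which after clearing $u$ and using $\lambda = \lambda'$ is \equref{lzeci2b}; and the one for $f'$ gives a relation equivalent to \equref{lzeci1c}, as explained next. Hence $\Omega \equiv \Omega'$ in the sense of \deref{echiaa} precisely when $(D, \lambda, a_0, \alpha_0) \equiv (D', \lambda', a_0', \alpha_0')$ in the sense of \deref{echivtwderivari}, which is the claim.

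The only genuine computation is that of the $f'$-identity. Its right-hand side in \deref{echiaa} has seven summands, several carrying nested occurrences of $r$ and $v^{-1}$; when evaluated at $(x, x)$ each of them produces a uniform factor $u^{-2}$, and collecting the remaining contributions (coming from $a_0$, $\alpha_0\, r$, $r \cdot r$, $\lambda(r)\, r$ and $D(r)$) yields $a_0' = u^{-2}\bigl(a_0 + \alpha_0\, r + r \cdot r - 2\lambda(r)\, r - 2 D(r)\bigr)$. Substituting here the relations $\lambda = \lambda'$, $D(r) = u D'(r) + r \cdot r - \lambda'(r)\, r$ and $\alpha_0 = u \alpha_0' + 2\lambda'(r)$ (the last two being \equref{lzeci3a} and \equref{lzeci2b} evaluated at $r$), and then clearing denominators, turns it into $a_0 = u^{2} a_0' + r \cdot r + 2u\, D'(r) - u\, \alpha_0'\, r - 2\lambda'(r)\, r$, which is \equref{lzeci1c}. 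I expect all of this to go through without surprises; the other three identities and the bookkeeping identifying $\Phi$'s descent with the stated map are routine, so I anticipate no further obstacle.
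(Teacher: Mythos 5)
Your proposal is correct and follows exactly the route the paper intends (the paper gives no written proof, merely stating the theorem as "a consequence of all the above", i.e.\ of \thref{main1}, \prref{unifdim1} and \deref{echiaa}): you transport the equivalence relation of \deref{echiaa} along the bijection of \prref{unifdim1} and verify that it becomes precisely the relation of \deref{echivtwderivari}. Your explicit computations --- the identification $(r,v)\leftrightarrow(r,u)$, the derivation of $\lambda=\lambda'$ and of \equref{lzeci3a}--\equref{lzeci1c}, including the substitution of $D(r)$ and $\alpha_0$ into the $f'$-identity --- all check out.
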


The next example computes ${\mathcal J}_{A} (k, \, A)$
and then describes all Jordan algebra structures which extend the
Jordan algebra structure from $A$ to a vector space of dimension
$1 + {\rm dim}_k (A)$. The detailed computations are rather long
but straightforward and will be omitted.

\begin{example}
Let $A := k^n$ be the abelian Jordan algebra of dimension $n$,
i.e. $a \cdot b = 0$, for all $a$, $b\in k^n$. We define the
following sets:
\begin{eqnarray*}
{\mathcal F}_1 (k^n) &:=& \{(D, \, a_0, \, \alpha_0) \in {\rm
End}_k (k^n) \times k^n \times k \, | \,\, D^2 (a_0) = \alpha_0 \,
D(a_0) \}\\
{\mathcal F}_2 (k^n) &:=& \{(D, \, \lambda, \, a_0, \, \alpha_0)
\in {\rm End}_k (k^n) \times (k^n)^* \times k^n \times k \, | \,\,
D (a_0) = \lambda (a_0) = 0, \,\, \lambda \neq 0 \}
\end{eqnarray*}
Then the classifying object ${\mathcal J}_{k^n} (k, \, k^n)$ is
the coproduct of the following sets:
$$
{\mathcal J}_{k^n} (k, \, k^n) \cong \bigl({\mathcal F}_1
(k^n)/\equiv_1 \bigl) \, \sqcup \, \bigl( {\mathcal F}_1
(k^n)/\equiv_2 \bigl)
$$
where $\equiv_1$ and $\equiv_2$ are the following equivalence
relations: $(D, \, a_0, \, \alpha_0) \equiv_1 (D', \, a_0', \,
\alpha_0')$ if and only if there exists a pair $(r, \, u) \in k^n
\times k^*$ such that for all $a\in k^n$ we have:
$$
D(a) = u D'(a), \quad \alpha_0 = u \alpha_0', \quad a_0 = u^2 \,
a'_0 + 2 u \, D' (r) - u \, \alpha_0' \, r
$$
and $(D, \, \lambda, \, a_0, \, \alpha_0) \equiv_2 (D', \,
\lambda', \, a_0', \, \alpha_0')$  if and only if $\lambda =
\lambda'$ and there exists a pair $(r, \, u) \in k^n \times k^*$
such that for all $a\in k^n$ we have:
\begin{eqnarray*}
D(a) &=& u \, D'(a) - \lambda'(a) \, r , \quad  \alpha_0 = u \, \alpha_0' + 2 \lambda'(r)   \\
a_0 &=& u^2 \, a'_0 + 2 u \, D' (r) - u \, \alpha_0' \, r - 2
\lambda'(r) \, r
\end{eqnarray*}
Any $(n+1)$-dimensional Jordan algebra having a $n$-dimensional
abelian subalgebra is isomorphic to $k^n_{(D, \, a_0, \,
\alpha_0)}$, for some $(D, \, a_0, \, \alpha_0) \in {\mathcal F}_1
(k^n)$ or to $k^n_{(D, \, \lambda, \, a_0, \, \alpha_0)}$, for
some $(D, \, \lambda, \,  a_0, \, \alpha_0) \in {\mathcal F}_2
(k^n)$.
\end{example}

\subsection*{Application: an Artin type theorem for Jordan algebras}
Let $A$ be a Jordan algebra, $V$ a vector space and $\Omega(A, \,
V) = \bigl(\triangleleft, \, \triangleright, \, f, \, \cdot_V
\bigl)$ a Jordan extending structure of $A$ through $V$ such that
$\triangleright$ is trivial, i.e. $x \triangleright a = 0$, for
all $x\in V$ and $a \in A$. Then, in this case the compatibility
conditions (E1)-(E8) of \thref{1} take a simpler form and
the associated unified product, called the
\emph{twisted product} of $A$ and $V$,
will be denoted by $A \, \#^{\bullet} \, V $. Thus $A \, \#^{\bullet} \,
V = A \times \, V $ with the multiplication given by:
\begin{equation*}
(a, x) \circ (b, y) := \bigl(a \cdot b + f(x, y), \,\, x
\triangleleft b + y\triangleleft a + x\cdot_V y \bigl)
\end{equation*}
for all $a$, $b \in A$ and $x$, $y \in V$. Twisted products are
generalizations of spin factor Jordan algebras from
\exref{spinfactorm} which can be recovered by setting $A := k$ and $\cdot_V
:= 0$. These products will be the main ingredient in proving the following Artin type theorem. As in the classical case, the Jordan algebra version of the Artin theorem will provide a way of reconstructing a Jordan algebra out of its subalgebra of invariants.

\begin{theorem} \thlabel{recsiGal}
Let $G$ be a finite group whose order is invertible in $k$ and $A$ a Jordan algebra. Assume the group morphism $\varphi: G \to
{\rm Aut}_{\rm J} (A)$ is an action of $G$ on a $A$. If $A^G := \{a \in A \, | \, \varphi (g) (a) = a, \forall g\in G \} \subseteq A$ is the corresponding
subalgebra of invariants and $V$ a complement of $A^G$ in $A$, then there exists a Jordan extending structure of $A^G$ through
$V$ and an isomorphism of Jordan algebras $A \cong A^G \,
\#^{\bullet} \, V$.
\end{theorem}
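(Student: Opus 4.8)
The plan is to invoke \thref{classif} for the subalgebra $A^G\subseteq A$, taking as linear retraction the averaging (Reynolds) operator attached to the action $\varphi$, and then to observe that with this particular choice the left action $\triangleright$ of the resulting unified product vanishes, so that the product is automatically a twisted product.

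Concretely, I would first set $p:A\to A$, $p(a):=|G|^{-1}\sum_{g\in G}\varphi(g)(a)$, which makes sense precisely because $|G|$ is invertible in $k$. Since $\varphi$ is a group morphism and each $\varphi(g)$ is linear, one checks at once that $\varphi(h)\bigl(p(a)\bigr)=p(a)$ for all $h\in G$, so the image of $p$ is contained in $A^G$, while $p(a)=a$ for $a\in A^G$. Thus $p$ is a linear retraction of $A$ onto $A^G$ and $W:=\Ker(p)$ is a complement of $A^G$ in $A$. The construction in the proof of \thref{classif}, carried out with the subalgebra $A^G$ and the retraction $p$, yields a Jordan extending structure $\bigl(\triangleleft,\triangleright,f,\cdot_W\bigr)$ of $A^G$ through $W$, given by the formulas \equref{bala1}--\equref{bala4}, together with an isomorphism of Jordan algebras $A\cong A^G\,\natural\,W$ that stabilizes $A^G$.

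The key -- and essentially the only -- computation is that $\triangleright$ is trivial. Indeed, for $x\in W=\Ker(p)$ and $a\in A^G$, using that each $\varphi(g)$ is a Jordan algebra automorphism and that $\varphi(g)(a)=a$,
\begin{equation*}
x\triangleright a \;=\; p(x\cdot a) \;=\; \frac{1}{|G|}\sum_{g\in G}\varphi(g)(x)\cdot\varphi(g)(a) \;=\; \Bigl(\frac{1}{|G|}\sum_{g\in G}\varphi(g)(x)\Bigr)\cdot a \;=\; p(x)\cdot a \;=\; 0 .
\end{equation*}
Hence $A^G\,\natural\,W$ has trivial $\triangleright$, i.e. it is a twisted product $A^G\,\#^{\bullet}\,W$, and therefore $A\cong A^G\,\#^{\bullet}\,W$.

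It then remains to replace the canonical complement $W=\Ker(p)$ by the given complement $V$. Since $W$ and $V$ are both isomorphic, as vector spaces, to $A/A^G$, I would fix a linear isomorphism $\sigma:W\to V$ and transport the Jordan algebra structure of $A^G\,\#^{\bullet}\,W$ along the linear isomorphism $A^G\times W\to A^G\times V$, $(a,x)\mapsto(a,\sigma(x))$. A direct check shows that the transported multiplication on $A^G\times V$ is again of the form \equref{brackunif} with trivial $\triangleright$-component, the remaining data being $u\triangleleft ' b:=\sigma\bigl(\sigma^{-1}(u)\triangleleft b\bigr)$, $f'(u,w):=f\bigl(\sigma^{-1}(u),\sigma^{-1}(w)\bigr)$ and $u\cdot_V^{'} w:=\sigma\bigl(\sigma^{-1}(u)\cdot_W \sigma^{-1}(w)\bigr)$; hence $\bigl(\triangleleft ',\,\triangleright ':=0,\,f',\,\cdot_V^{'}\bigr)$ is a Jordan extending structure of $A^G$ through $V$ whose unified product is the twisted product $A^G\,\#^{\bullet}\,V$, and composing the two isomorphisms gives $A\cong A^G\,\#^{\bullet}\,V$, again stabilizing $A^G$. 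The one genuinely delicate point is the choice of retraction: an arbitrary complement of $A^G$ in $A$ need not be an $A^G$-submodule, and the averaging operator is precisely what forces $\triangleright=0$; everything else is formal, resting on \thref{classif}.
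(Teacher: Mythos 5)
Your proposal is correct and takes essentially the same route as the paper: the averaging operator $t(a)=|G|^{-1}\sum_{g\in G}\varphi(g)(a)$ is used as the retraction in \thref{classif}, and the identity $t(x\cdot a)=t(x)\cdot a$ for $a\in A^G$ forces $x\triangleright a = t(x)\cdot a=0$ on $\Ker (t)$, so the unified product degenerates to a twisted product. Your closing transport-of-structure step to an arbitrary complement $V$ is a harmless extra precision that the paper sidesteps by simply taking $V:=\Ker (t)$.
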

\begin{proof}
We denote $\varphi (g) (a) = g \rightharpoonup a$,
for all $g\in G$ and $a\in A$. First, note that $A^G$ is a subalgebra of $A$ and $g
\rightharpoonup (a \cdot b) = (g\rightharpoonup a) \cdot
(g\rightharpoonup b)$, for all $g\in G$, $a$, $b\in A$. We define
the trace map as follows for all $x\in A$:
\begin{equation}\eqlabel{tracedef}
t : A \to A^G, \quad t (x) :=  |G|^{-1} \, \sum_{g\in G} \, g
\rightharpoonup x
\end{equation}
Observe that $t(x) \in A^G$ and therefore for all $a \in A^G$ we have:
$$
t ( x \cdot a  ) = |G|^{-1} \, \sum_{g\in G} \, g \rightharpoonup
(x \cdot a) = |G|^{-1} \, \sum_{g\in G} \, (g\rightharpoonup x)
\cdot (g\rightharpoonup a) = t(x) \cdot a
$$
Secondly, we note that the trace map $t : A \to A^G$ is a linear
retraction of the canonical inclusion $A^G \hookrightarrow A$,
i.e. $t (a) = a$, for all $a\in A^G$. Now, if we compute the
canonical extending structure of $A^G$ through $V := {\rm Ker}
(t)$ associated to the trace map $t$, using the formulas
\equref{bala1}-\equref{bala4} from the proof of \thref{classif},
we obtain that for all $x \in V$ and $a \in A^G$ we have:
$$
x\triangleright_t a = t(x \cdot a) = t(x) \cdot a = 0,
$$
i.e. the action $\triangleright_t$ is trivial. Applying
once again \thref{classif} we obtain that the map defined for all
$a\in A^G$ and $x\in V$ by:
\begin{equation*}
\vartheta: A^G \, \#^{\bullet} \, V \to A, \qquad \vartheta(a, x)
:= a + x
\end{equation*}
is an isomorphism of Jordan algebras. This finishes the proof.
\end{proof}

\begin{remark} \relabel{hilbert}
If the group $G$ in \thref{recsiGal} is a finite cyclic group generated by an element $g$ then it can be easily seen that ${\rm Ker} (t) = \{ a - (g\rightharpoonup a)
\, | \, a \in A\}$.
\end{remark}

An invariant theory, similar to the classical theory of groups acting on associative algebras, can be developed for Jordan algebras by studying the Jordan algebra extension $A^G \subseteq A$. For instance, the following problems arise naturally in this context:

\emph{Let $\varphi: G \to {\rm Aut}_{\rm J} (A)$ be an action of a
group $G$ on a Jordan algebra $A$. If the subalgebra of invariants
$A^G$ is special (resp. (semi)simple, solvable, nilpotent, etc.),
is $A$ also special (resp. (semi)simple, solvable,
nilpotent, etc.)?}

\section{Crossed products and the extension problem for Jordan algebras} \selabel{Hilbertext}

In this section we deal with crossed products of Jordan algebras. As mentioned before, this important construction can be obtained as a special case of the unified product.
Indeed, let $\Omega(A, \, V) = \bigl(\triangleleft, \, \triangleright, \,
f, \{-, \, -\} \bigl)$ be an extending datum of the Jordan algebra
$A = (A, \cdot) $ through a vector space $V$ such that
$\triangleleft$ is trivial, i.e. $x \triangleleft a = 0$, for all
$x\in V$ and $a \in A$. Applying \thref{1} we obtain that
$\Omega(A, \, V) = \bigl(\triangleright, \, f, \, \cdot_V \bigl)$
is a Jordan extending structure of $A$ through $V$ if and only if
$(V, \cdot_V)$ is a Jordan algebra and the following
compatibilities hold for all $a\in A$ and $x$, $y$, $z\in V$:
\begin{enumerate}
\item[(CP1)] $f\colon V\times V \to A$ is a symmetric map;\\
\item[(CP2)] $a \cdot (x \triangleright a^2)  = a^2 \cdot (x
\triangleright a)$; \\
\item[(CP3)] $x \triangleright \bigl( f(x, x) \cdot a \bigl) + \,
x \triangleright (x^2 \triangleright a)  = f(x, x) \cdot (x
\triangleright a) + x^2 \triangleright (x \triangleright a)$; \\
\item[(CP4)] $x \triangleright \bigl(y \triangleright f(x, x)
\bigl) + \, x \triangleright f(x^2, y) + f\bigl( x^2 \,\cdot_V \,
y, \, x \bigl) \, \, = f(x, x) \cdot f(x, y) + x^2 \triangleright
f(x, y) + (x\cdot_ V \, y) \triangleright f(x, x) + f(x^2, \,
x\cdot_V \, y)$; \\
\item[(CP5)] The missing relations \equref{missingrel} hold for
the trivial action $\triangleleft$ .
\end{enumerate}

A system $(A, V, \triangleright, f)$ consisting of two Jordan
algebras $A = (A, \cdot)$, $V = (V, \cdot_V)$ and two bilinear
maps $\triangleright : V \times A\to A$, $f: V\times V \to A$
satisfying the five compatibility conditions (CP1)-(CP5) is called
a \emph{crossed system} of $A$ and $V$ and the associated unified
product, denoted by $A
\#_{\triangleright}^f \, V$, will be called the \emph{crossed product}
of the Jordan algebras $A$ and $V$. Thus, $A \#_{\triangleright}^f
\, V = A \times \, V $ with the multiplication given for any $a$,
$b \in A$ and $x$, $y \in V$ by:
\begin{equation}\eqlabel{brackcrosspr}
(a, x) \circ (b, y) := \bigl( a \cdot b + x \triangleright b +
y\triangleright a + f(x, y), \,\,  x \cdot_V \, y \bigl)
\end{equation}

If $(A, V, \triangleright, f)$ is a crossed system of two Jordan
algebras then, $A \cong A \times \{0\}$ is an ideal in $A
\#_{\triangleright}^f \, V$ since $(a, 0) \circ (b, y) := \bigl( a
\cdot b + y\triangleright a , \, 0 \bigl)$. Conversely, we can show that any Jordan algebra structure on a vector space
$E$ which contains a given Jordan algebra $A$ as an ideal is isomorphic to a crossed product.

\begin{corollary}\colabel{croslieide}
Consider $A$ to be a Jordan algebra and $E$ a vector space such that $A \subseteq E$. Then, any Jordan algebra structure on $E$ such that
$A$ becomes a Jordan ideal in $E$ is isomorphic to a crossed product $A \#_{\triangleright}^f \, V$ of Jordan
algebras.
\end{corollary}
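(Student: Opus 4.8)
The plan is to run the argument of \thref{classif} and then observe that, once $A$ is assumed to be an ideal, the right action produced there automatically vanishes, so that the unified product in question is in fact a crossed product.

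First I would fix a linear retraction $p : E \to A$ of the inclusion $A \hookrightarrow E$, which exists since $k$ is a field, and set $V := \Ker(p)$, so that $E = A \oplus V$ as vector spaces. Forming the extending datum $\Omega(A, V) = \bigl(\triangleleft_p, \triangleright_p, f_p, (\cdot_V)_p\bigr)$ attached to $p$ via the formulas \equref{bala1}--\equref{bala4}, \thref{classif} tells us that $\Omega(A, V)$ is a Jordan extending structure of $A$ through $V$ and that $\varphi : A \,\natural\, V \to E$, $\varphi(a, x) = a + x$, is an isomorphism of Jordan algebras stabilizing $A$.

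The single new ingredient is that $\triangleleft_p$ is trivial. Indeed, for $x \in V$ and $a \in A$ the hypothesis that $A$ is an ideal of $(E, \cdot_E)$ gives $x \cdot_E a \in A$, whence $p(x \cdot_E a) = x \cdot_E a$ and therefore $x \triangleleft_p a = x \cdot_E a - p(x \cdot_E a) = 0$. Thus $\Omega(A, V) = \bigl(\triangleright_p, f_p, (\cdot_V)_p\bigr)$ is a Jordan extending structure with trivial right action; by the discussion opening \seref{Hilbertext} this says precisely that $\bigl(V, (\cdot_V)_p\bigr)$ is a Jordan algebra, that $(A, V, \triangleright_p, f_p)$ is a crossed system (i.e. (CP1)--(CP5) hold), and that the associated unified product coincides, as an algebra, with the crossed product $A \#_{\triangleright_p}^{f_p} \, V$ of \equref{brackcrosspr}. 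Composing, $E \cong A \,\natural\, V = A \#_{\triangleright_p}^{f_p} \, V$ via $\varphi$, which is the assertion.

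I do not anticipate a genuine obstacle: the result is a straightforward specialization of \thref{classif}, the entire content being the one-line computation that the ideal hypothesis forces $\triangleleft_p = 0$. For completeness one may add the converse remark already recorded before the statement — in any crossed product $A \#_{\triangleright}^{f} \, V$ the copy $A \times \{0\}$ is an ideal, since $(a, 0) \circ (b, y) = (a \cdot b + y \triangleright a, \, 0)$ — so that crossed products are exactly the Jordan algebra structures on $E$ in which $A$ sits as a Jordan ideal.
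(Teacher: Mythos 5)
Your proposal is correct and follows exactly the paper's own argument: apply \thref{classif} to get the extending datum $\Omega(A,V)=\bigl(\triangleleft_p,\triangleright_p,f_p,(\cdot_V)_p\bigr)$ from a retraction $p$, then note that the ideal hypothesis gives $x\cdot_E a\in A$, hence $p(x\cdot_E a)=x\cdot_E a$ and $x\triangleleft_p a=0$, so the unified product is the crossed product $A\#_{\triangleright}^{f}V$. Nothing is missing.
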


\begin{proof}
Let $\cdot_E$ be a Jordan algebra structure on $E$ such that $A$
is an ideal in $E$. In particular, $A$ is a subalgebra of $E$ and
hence we can apply \thref{classif}. In this case the action
$\triangleleft = \triangleleft_p$ of the Jordan extending
structure $\Omega(A, \, V) = \bigl(\triangleleft_p, \,
\triangleright_p, \, f_p, (\cdot_V)_p \bigl)$ defined by
\equref{bala2} in the proof of \thref{classif} is trivial since
for any $x \in V$ and $a \in A$ we have $x \cdot_E a \in A$ and
hence $p (x \cdot_E a) = x \cdot_E a$. Thus, $x \triangleleft_p a
= 0$, i.e. the unified product $A \,\natural \,_{\Omega(A, V)} V =
A \#_{\triangleright}^f \, V $ is the crossed product of the
Jordan algebras $A$ and $V:= {\rm Ker}(p)$.
\end{proof}

Using \coref{croslieide} we obtain the following result which shows
that crossed products play a crucial role in the classification
of all Jordan algebras of a given finite dimension.

\begin{corollary}\colabel{iterare}
Any finite dimensional Jordan algebra is isomorphic to an
iteration of crossed products of the form $\Bigl( \cdots \bigl(
(S_1 \# \, S_2)\# \, S_3 \bigl) \# \, \cdots \# \, S_t\Bigl)$,
where $S_i$ is a finite dimensional simple Jordan algebra for all
$i= 1,\cdots, t$.
\end{corollary}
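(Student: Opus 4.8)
The plan is to argue by induction on $n := \dim_k J$, using \coref{croslieide} as the single structural ingredient. If $J$ is already simple --- which in particular covers $\dim_k J \le 1$, since a one-dimensional algebra has no proper nonzero ideals --- there is nothing to prove: one takes $t = 1$ and $S_1 := J$. Otherwise $J$ is a nonzero, non-simple, finite-dimensional Jordan algebra, so it has a \emph{maximal} proper ideal $M$ (a proper ideal of largest possible dimension). I would then check, in the routine way, that $J/M$ is a simple Jordan algebra: every ideal of $J/M$ pulls back to an ideal of $J$ squeezed between $M$ and $J$, hence is $0$ or $J/M$ by maximality of $M$, while the degenerate possibility $(J/M)^2 = 0$ forces $\dim_k (J/M) = 1$, again simple.

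Next I would apply \coref{croslieide} to the pair $M \subseteq J$: since $M$ is an ideal of $J$, there is a crossed system $(M, V, \triangleright, f)$ with $J \cong M \#_{\triangleright}^{f} \, V$. A quick inspection of the construction behind \coref{croslieide} --- it takes $V = \Ker(p)$ for a linear retraction $p : J \to M$, so that the induced action $\triangleleft$ is trivial and $x \cdot_V y$ is simply the $V$-component of $x \cdot_E y$ in the formulas \equref{bala1}--\equref{bala4} --- shows that $(V, \cdot_V)$ is isomorphic to the quotient $J/M$; in particular $V$ is simple. Since $\dim_k M < n$, the induction hypothesis provides simple Jordan algebras $S_1, \dots, S_m$ with $M \cong \Bigl( \cdots \bigl( (S_1 \# S_2) \# S_3 \bigr) \# \cdots \# S_m \Bigr)$. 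Transporting the crossed system along this isomorphism and setting $S_{m+1} := V$, I obtain
\[
J \;\cong\; \Bigl( \cdots \bigl( (S_1 \# S_2) \# \cdots \# S_m \bigr) \# S_{m+1} \Bigr),
\]
which is exactly an iterated crossed product of simple Jordan algebras and closes the induction.

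There is essentially no computation here, so the only delicate points are bookkeeping ones. The step I would be most careful about is the identification $(V, \cdot_V) \cong J/M$ inside \coref{croslieide}, since this is precisely what lets the simplicity of the quotient $J/M$ pass to the freshly produced factor $S_{m+1}$; fortunately it falls out of the proof of \coref{croslieide} as soon as one observes that $M$ being an ideal makes the right action $\triangleleft$ trivial. The one remaining subtlety is purely a matter of convention: both the base case and the passage ``$J/M$ simple'' rely on regarding one-dimensional algebras as simple, in line with the classification of simple Jordan algebras invoked throughout the paper. With that convention fixed, the induction runs through with no genuine obstacle.
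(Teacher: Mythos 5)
Your proof is correct and rests on the same two ingredients as the paper's: \coref{croslieide} to split off an ideal as a crossed product, plus induction on the dimension. The one genuine difference is your choice of a \emph{maximal} proper ideal $M$, so that the quotient $J/M$ (equivalently, the complement $(V,\cdot_V)$, which you correctly identify with $J/M$ via $\pi|_V$) is simple and only the left-hand factor needs the induction hypothesis. The paper instead takes an arbitrary proper ideal $I$, writes $A \cong I \# (A/I)$, and applies induction to \emph{both} $I$ and $A/I$; read literally, that produces a tree-shaped iteration of crossed products rather than the left-nested chain $\bigl(\cdots((S_1\#S_2)\#S_3)\cdots\bigr)\#S_t$ asserted in the statement, so your variant is actually the tighter argument for obtaining the bracketing exactly as written. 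Your remaining caveat --- that a one-dimensional (possibly null) algebra must be counted as simple for the base case and for the degenerate quotient $(J/M)^2=0$ --- is a convention the paper itself adopts silently in its base case $n=1$, so you are not assuming anything the paper does not.
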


\begin{proof}
Let $A$ be a Jordan algebra of dimension $n$. We prove the statement by
induction on $n$. If $n = 1$ then $A \cong k $ is a simple
algebra. If $n > 1$ then we distinguish two cases. Firstly, if $A$ is simple there is
nothing to prove. On the contrary, if $A$ has a proper ideal
$\{0\} \neq I \neq A$, it follows from \coref{croslieide} that $A
\cong I \# \, (A/I)$, a crossed product of $I$ and $A/I$. If $I$
and $A/I$ are simple the proof is finished; Otherwise, since ${\rm dim}_k (I)$, $ {\rm dim}_k (A/I) < n$, the conclusion follows by induction.
\end{proof}

\subsection*{Application: the extension problem}

We deal with the extension problem for Jordan algebras in the classical way, by using crossed products. Indeed, note first any crossed product $A \#_{\triangleright}^f \, V$ of two Jordan algebras $A$ and $V$ is an extension of $V$ by $A$ via the
following canonical exact sequence:
\begin{eqnarray} \eqlabel{extencrosb}
\xymatrix{ 0 \ar[r] & A \ar[r]^{i_A} & {A \#_{\triangleright}^f \,
V} \ar[r]^{\pi_V} & V \ar[r] & 0 }
\end{eqnarray}
where $i_A : A \to A \#_{\triangleright}^f \, V$, $i_A (a) := (a,
0)$ and $\pi_V : A \#_{\triangleright}^f \, V \to V$, $\pi_V (a, x)
:= x$, for all $a\in A$ and $x\in V$. Conversely, we have:

\begin{theorem} \thlabel{extenscros}
Let $A$ and $V$ be two Jordan algebras and $(E, \, i, \, \pi)$ an
extension of $V$ by $A$, that is there exists an exact sequence of
Jordan algebras
\begin{eqnarray*}
\xymatrix{ 0 \ar[r] & A \ar[r]^{i} & E  \ar[r]^{\pi} & V \ar[r] &
0 }
\end{eqnarray*}
Then $(E, \, i, \, \pi)$ is cohomologous, in the sense of
\deref{echivextedna}, to a crossed product extension of the form
\equref{extencrosb}.
\end{theorem}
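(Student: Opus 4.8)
The plan is to reduce to \coref{croslieide} after choosing the data entering that corollary in a way compatible with the quotient map $\pi$. First I would identify $A$ with its image $i(A)\subseteq E$; since $i(A)={\rm Ker}(\pi)$ is the kernel of a morphism of Jordan algebras, $A$ is not merely a subalgebra but an \emph{ideal} of $E$. Working over a field, pick a linear section $s\colon V\to E$ of $\pi$, i.e. $\pi\circ s={\rm Id}_V$, and let $p\colon E\to A$ be the linear retraction of the inclusion $A\hookrightarrow E$ obtained by projecting onto $A$ along $W:={\rm Im}(s)={\rm Ker}(p)$; thus $E=A\oplus W$ as vector spaces and $\pi|_W\colon W\to V$ is a linear isomorphism with inverse $s$.

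Next I would apply \thref{classif} (as in the proof of \coref{croslieide}) with this particular retraction $p$. Because $A$ is an ideal, the action $\triangleleft_p$ defined in \equref{bala2} vanishes, so the Jordan extending structure attached to $p$ is a \emph{crossed system} $(A,W,\triangleright,f)$ --- with $\triangleright$, $f$ and the Jordan multiplication $\cdot_W$ on $W$ read off from \equref{bala1}, \equref{bala3} and \equref{bala4} --- and $\varphi\colon A\#_{\triangleright}^{f}\,W\to E$, $\varphi(a,w)=a+w$, is an isomorphism of Jordan algebras that stabilizes $A$. The crucial point is that $\pi|_W\colon (W,\cdot_W)\to V$ is in fact an isomorphism of \emph{Jordan algebras}: for $w_1,w_2\in W$ one has $w_1\cdot_W w_2=w_1\cdot_E w_2-p(w_1\cdot_E w_2)$ with $p(w_1\cdot_E w_2)\in A={\rm Ker}(\pi)$, whence $\pi(w_1\cdot_W w_2)=\pi(w_1\cdot_E w_2)=\pi(w_1)\cdot_V\pi(w_2)$. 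This uses precisely that $i(A)={\rm Ker}(\pi)$ is an ideal.

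I would then transport the crossed system along $\pi|_W$: set $v\triangleright' a:=s(v)\triangleright a$ and $f'(u,v):=f\bigl(s(u),s(v)\bigr)$ for $a\in A$ and $u$, $v\in V$. Since $s$ is an isomorphism of Jordan algebras, the compatibilities (CP1)--(CP5) for $(A,V,\triangleright',f')$ follow immediately from those for $(A,W,\triangleright,f)$, so $(A,V,\triangleright',f')$ is a crossed system and $(a,w)\mapsto(a,\pi(w))$ defines an isomorphism $A\#_{\triangleright}^{f}\,W\to A\#_{\triangleright'}^{f'}\,V$ of Jordan algebras. Composing it with $\varphi$ yields a Jordan algebra isomorphism $\psi\colon A\#_{\triangleright'}^{f'}\,V\to E$, $\psi(a,v)=a+s(v)$. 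Finally I would check that $\psi$ fits into the diagram of \deref{echivextedna}: indeed $\psi\bigl(i_A(a)\bigr)=\psi(a,0)=a=i(a)$, and $\pi\bigl(\psi(a,v)\bigr)=\pi(a)+\pi(s(v))=v=\pi_V(a,v)$, for all $a\in A$ and $v\in V$. Hence $(E,i,\pi)$ is cohomologous to the crossed product extension \equref{extencrosb} associated to $(A,V,\triangleright',f')$; the direction of the connecting morphism is immaterial since $\approx$ is an equivalence relation.

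The computations involved --- the identities (CP1)--(CP5) for the transported system and the two commutativities of the diagram --- are entirely routine, and in fact the (CP)-identities need not be rechecked by hand, being inherited through the isomorphism $s$. The only genuinely delicate point is the observation that $\pi$ restricts to a \emph{Jordan algebra} isomorphism on the complement $W$, which hinges on $i(A)={\rm Ker}(\pi)$ being an ideal; the remaining subtlety is purely a matter of bookkeeping, namely arranging things so that the resulting crossed product extension has the given Jordan algebra $V$ as its quotient rather than merely an isomorphic copy of it, which is exactly why the transport-of-structure step cannot be omitted.
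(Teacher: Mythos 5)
Your proposal is correct and follows essentially the same route as the paper: choose a linear section $s$ of $\pi$, use the fact that $A={\rm Ker}(\pi)$ is an ideal to kill the action $\triangleleft$, and obtain the crossed system by transport of structure rather than by verifying (CP1)--(CP5) directly; your transported maps $v\triangleright' a=s(v)\cdot_E a$ and $f'(u,v)=s(u)\cdot_E s(v)-s(u\cdot_V v)$ are exactly the paper's $\triangleright_s$ and $f_s$. The only difference is cosmetic: you pass through the intermediate crossed product $A\#_{\triangleright}^{f}\,W$ with $W={\rm Im}(s)\subseteq E$ and then identify $W\cong V$, whereas the paper defines the structure on $A\times V$ directly via $\psi(a,x)=a+s(x)$.
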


\begin{proof}
We identify $A \cong {\rm Im} (i) = {\rm Ker} (\pi) \subseteq
E$ which allows us to assume that $A$ is an ideal of $E$. Since we are working
over a field, we can find a $k$-linear section $s : V \to E$ of
$\pi$, i.e. $\pi (s(x)) = x$, for all $x\in V$. Then $\psi : A
\times V \to E$, $\psi (a, x) := a + s(x)$ is an isomorphism of
vector spaces with the inverse $\psi^{-1} (y) = \bigl(y - s
(\pi(y)), \,\, \pi(y) \bigl)$, for all $y\in E$. Using the section
$s$ we can define the following two bilinear maps:
\begin{eqnarray*}
\triangleright &=& \triangleright_{s} \, \, : V \times A \to A,
\,\,\,\,\,
x \triangleright a := s(x) \cdot_E a \eqlabel{act2}\\
f &=& f_s \,\,\, : V \times V \to A, \,\,\,\, f(x, \, y) := s(x)
\cdot_E s(y) - s(x \cdot_V y) \eqlabel{coc1}
\end{eqnarray*}
for all $x$, $y \in V$ and $a \in A = {\rm Ker} (\pi)$. Since
$\pi$ is a Jordan algebra map and $s$ is a section of it we can
easily see that these are well-defined maps. We shall prove that
$(A, \, V, \, \triangleright_{s}, \, f_s)$ is a crossed system and
$\psi : A \#_{\triangleright_s}^{f_s} \,\, V \to E$ is an
isomorphism of Jordan algebras that stabilizes $A$ and co-stabilizes
$V$ and this will finish the proof. Instead of proving the compatibility
conditions (CP1)-(CP5) for the pair $(\triangleright_{s}, \, f_s)$ by a
rather long computation we will use the same trick as in \thref{classif} and \thref{1}.
First, the map $\psi: A \times V \to E$, $\psi(a, \, x) = a +
s(x)$ is a linear isomorphism between the Jordan algebra $E = (E,
\cdot_E)$ and the direct product of vector spaces $A \times V$.
Thus, there exists a unique Jordan algebra structure on $A \times
V$ such that $\psi$ is an isomorphism of Jordan algebras. This
unique multiplication $\bullet$ on $A \times V$ is given for all
$a$, $b \in A$ and $x$, $y\in V$ by:
\begin{eqnarray*}
(a, x) \bullet (b, y) &=& \psi^{-1} \bigl(\psi(a, x) \cdot_E
\psi(b, y)\bigl) = \psi^{-1} \bigl(  ( a + s(x) ) \cdot_E (b + s(y)) \bigl) \\
&=& \psi^{-1} \bigl( a \cdot_E b + a \cdot_E s(y) + s(x) \cdot_E b + s(x) \cdot_E s(y) \bigl) \\
&=& \bigl (a \cdot_A b + a \cdot_E s(y) + s(x) \cdot_E b + s(x)
\cdot_E \, s(y) - s(x\cdot_V y), \,\, x\cdot_V \, y \bigl) \\
&=& \bigl( a \cdot b + x \triangleright b + y\triangleright a +
f(x, y), \,\,  x \cdot_V \, y \bigl)
\end{eqnarray*}
This shows that the multiplication $\bullet$ on $A\times V$ coincides with the one given in
\equref{brackcrosspr}. Applying now \thref{1} for $\triangleleft
:= 0$ we obtain that $(A, \, V, \, \triangleright_{s}, \, f_s)$ is
a crossed system, $\psi : A \#_{\triangleright_s}^{f_s} \,\, V \to
E$ is an isomorphism of Jordan algebras and the diagram
\begin{eqnarray*}
\xymatrix {& A \ar[r]^{i_A} \ar[d]_{Id} & {A
\#_{\triangleright_s}^{f_s} \,\, V  }
\ar[r]^{\pi_V} \ar[d]^{\psi} & V \ar[d]^{Id}\\
& A \ar[r]^{i} & {E}\ar[r]^{\pi} & V}
\end{eqnarray*}
is commutative since $\pi (\psi (a, x)) = \pi (a) + \pi(s(x)) = x
= \pi_V (a, x)$, for all $a\in A$ and $x\in V$. The proof is now
finished.
\end{proof}

\thref{extenscros} shows that computing the classifying object ${\rm Ext} \, (V, \, A)$ reduces to the classification of all crossed product extensions of
the form \equref{extencrosb}.

Given two Jordan algebras $A$ and $V$, we denote by
${\mathcal C} {\mathcal P} \, (A, \, V)$ the set of all pairs
$(\triangleright, \, f)$ of bilinear maps $\triangleright: V\times
A \to A$, $f: V\times V \to A$ satisfying axioms (CP1)-(CP5).

\begin{definition}\delabel{nonex}
Let $A$ and $V$ be two Jordan algebras. Two pairs
$(\triangleright, \, f)$ and $(\triangleright', \, f') \in
{\mathcal C} {\mathcal P} \, (A, \, V)$ are called
\emph{cohomologous} and we denote this by $(\triangleright, \, f)
\approx (\triangleright', \, f')$ if there exists a linear map $r:
V \to A$ such that
\begin{eqnarray}
x \triangleright ' a &=& x \triangleright a - a \cdot r(x) \eqlabel{coho32}\\
f'(x, \, y) &=& f ( x, \, y) +  r (x  \cdot_V \, y) + r (x) \cdot
\, r(y) - x \triangleright r(y) - y \triangleright r(x)
\eqlabel{coho32b}
\end{eqnarray}
for all $a \in A$, $x$, $y \in V$.
\end{definition}

We can now provide a theoretical answer to the extension problem for
Jordan algebras:

\begin{theorem} \thlabel{scchjor}
Let $A$ and $V$ be two Jordan algebras. Then:

$(1)$ $\equiv$ as defined in \deref{nonex} is an equivalence
relation on the set ${\mathcal C} {\mathcal P} \, (A, \, V)$ of
all crossed systems of $A$ and $V$. We denote the quotient set by
${\rm H}^2_{\rm nab} \, (V, \, A ) :=  {\mathcal C} {\mathcal P}
(A, \, V)/ \equiv $ and we call it the \emph{non-abelian
cohomology} of $A$ and $V$.

$(2)$ There exists a bijection given by:
$$
{\rm H}^2_{\rm nab} \, (V, \, A ) \to {\rm Ext} \, (V, \, A),
\qquad \overline{(\triangleright, f)} \rightarrow \bigl(A
\#_{\triangleright}^f \, V, \, i_A, \pi_V \bigl)
$$
where $\overline{(\triangleright, f)}$ is the equivalence class of
$(\triangleright, f)$ via $\equiv$, $\bigl(A \#_{\triangleright}^f
\, V, \, i_A, \pi_V \bigl)$ is the crossed product extension given
by \equref{extencrosb} and ${\rm Ext} \, (V, \, A)$ is the
classifying object of all extensions of $V$ by $A$ as introduced
in \deref{echivextedn}.
\end{theorem}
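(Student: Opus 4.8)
The plan is to read the statement off from \leref{morfismuni} and \thref{extenscros}, using that a crossed product $A \#_{\triangleright}^f \, V$ is precisely the unified product attached to the Jordan extending structure $(\triangleleft := 0, \, \triangleright, \, f, \, \cdot_V)$, so that \leref{morfismuni} applies verbatim with $\triangleleft = \triangleleft' = 0$.

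The key step is a reformulation of the relation $\approx$ of \deref{nonex}: for crossed systems $(\triangleright, f), (\triangleright', f') \in {\mathcal C} {\mathcal P} \, (A, \, V)$, one has $(\triangleright, f) \approx (\triangleright', f')$ if and only if there is an isomorphism of Jordan algebras $\psi : A \#_{\triangleright}^f \, V \to A \#_{\triangleright'}^{f'} \, V$ that stabilizes $A$ and co-stabilizes $V$. Indeed, by \leref{morfismuni} with $\triangleleft = \triangleleft' = 0$ every Jordan algebra morphism stabilizing $A$ is of the form $\psi_{(r, v)}(a, x) = (a + r(x), v(x))$; co-stabilizing $V$ forces $v = {\rm Id}_V$, and then (M1) and (M3) hold automatically (recall that $V$ with its multiplication is fixed), (M2) becomes exactly \equref{coho32}, and (M4) --- after using \equref{coho32} to express $\triangleright'$ in terms of $\triangleright$ and invoking the commutativity of the multiplication of $A$ --- collapses to \equref{coho32b}. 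Since this chain of equivalences is reversible, conversely any linear $r : V \to A$ satisfying \equref{coho32}--\equref{coho32b} produces via $\psi_{(r, \, {\rm Id}_V)}$ an isomorphism of the required kind. Granting this, $\approx$ is an equivalence relation, because the class of isomorphisms $A \#_{\triangleright}^f \, V \to A \#_{\triangleright'}^{f'} \, V$ stabilizing $A$ and co-stabilizing $V$ contains the identity, is closed under inverses (explicitly $\psi_{(r, \, {\rm Id}_V)}^{-1} = \psi_{(-r, \, {\rm Id}_V)}$), and is closed under composition; this proves part $(1)$.

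For part $(2)$, consider the assignment $\overline{(\triangleright, f)} \mapsto (A \#_{\triangleright}^f \, V, \, i_A, \, \pi_V)$, with $(A \#_{\triangleright}^f \, V, \, i_A, \, \pi_V)$ the canonical crossed-product extension \equref{extencrosb}. It descends to $\approx$-classes: if $(\triangleright, f) \approx (\triangleright', f')$, the isomorphism $\psi_{(r, \, {\rm Id}_V)}$ from the reformulation makes diagram \equref{diagramaext} commute with $i = i_A$, $p = \pi_V$, so the two extensions are cohomologous in ${\rm Ext} \, (V, \, A)$ in the sense of \deref{echivextedna}. It is surjective by \thref{extenscros}, which states that every extension of $V$ by $A$ is cohomologous to a crossed-product extension of the form \equref{extencrosb}. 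It is injective because, if the crossed-product extensions of $(\triangleright, f)$ and $(\triangleright', f')$ are cohomologous, there is a Jordan algebra map $\varphi$ fitting into the commutative diagram \equref{diagramaext}; as noted after \deref{echivextedna} such a $\varphi$ is automatically an isomorphism, and commutativity of its two squares is exactly the assertion that $\varphi$ stabilizes $A$ and co-stabilizes $V$, so the reformulation forces $(\triangleright, f) \approx (\triangleright', f')$. Well-definedness, surjectivity and injectivity together give the asserted bijection.

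The only genuinely computational point in the argument is the collapse of condition (M4) to \equref{coho32b}, a short manipulation relying on \equref{coho32} and the commutativity of the multiplication of $A$; the one thing to watch there is keeping careful track of whether $\triangleright$ or $\triangleright'$ occurs at each place. Everything else is pure bookkeeping on top of \leref{morfismuni} and \thref{extenscros}, so I do not expect any serious obstacle.
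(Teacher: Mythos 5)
Your proposal is correct and follows essentially the same route as the paper: both rest on the observation that $(\triangleright, f) \approx (\triangleright', f')$ if and only if there is a Jordan algebra isomorphism $A \#_{\triangleright}^f \, V \to A \#_{\triangleright'}^{f'} \, V$ stabilizing $A$ and co-stabilizing $V$, obtained from \leref{morfismuni} with $\triangleleft = \triangleleft' = 0$ and $v = {\rm Id}_V$, combined with \thref{extenscros} for surjectivity. Your write-up merely spells out the reduction of (M1)--(M4) to \equref{coho32}--\equref{coho32b}, which the paper leaves implicit.
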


\begin{proof}
The proof follows from \thref{extenscros} once we observe that
$(\triangleright, \, f) \approx (\triangleright', \, f')$ in the
sense of \deref{nonex} if and only if there exists an isomorphism
of Jordan algebras $\psi: A \#_{\triangleright}^f \, V \to A
\#_{\triangleright '}^{f'} \, V $ which stabilizes $A$ and
co-stabilizes $V$. We just mention that the conditions given by
\equref{coho32} and \equref{coho32b} are the only non trivial ones among the
compatibilities listed in \deref{echiaa} for the trivial right actions $\triangleleft$
and $\triangleleft'$. We also use the last statement of
\leref{morfismuni}. Therefore, $\approx$ is an equivalence
relation on the set ${\mathcal C} {\mathcal P} \, (A, \, V)$ of
all crossed system $A$ and $V$ and the conclusion follows.
\end{proof}

\begin{remark}
As in the case of the extension problem from group theory or Lie theory,
\thref{scchjor} takes a simplified form in the 'abelian' case
(i.e. when $a\cdot b := 0$, for all $a$, $b\in A$). These are called 'null
extensions' or 'central extensions' and were first considered in \cite[Theorem
12]{jacb}\footnote{In \cite{jacb} the cocyles $f: V\times V \to A$
are called 'factor sets'.}. Central extensions were also
studied in \cite[Section 3]{AbH} in connection to the classification of
nilpotent Jordan algebras.
\end{remark}

The next example parameterizes all extensions of a $1$-dimensional Jordan algebra through an arbitrary Jordan algebra $A$.

\begin{example}\exlabel{nonexcoho}
For a given Jordan algebra $A$ we will compute the non-abelian
cohomology ${\rm H}^2_{\rm nab} \, (k, \, A )$. Let $\{x\}$ be a basis in $k$: then
any Jordan algebra structure on $k$ has the form $x \cdot_k x =
\varepsilon \, x$. Up two an isomorphism there are two Jordan
algebra structures on $k$ corresponding to $\varepsilon \in \{0,
\, 1\}$. Fix $\varepsilon \in \{0, \, 1\}$ and denote by ${\rm
H}^2_{\rm nab} \, (k_{\varepsilon}, \, A )$ the corresponding
non-abelian cohomology. Then ${\mathcal C} {\mathcal P} (A, \, k_{\varepsilon})$ identifies with the set of all pairs $(D, \, a_0) \in {\rm End}_k
(A) \times A$ satisfying the following six compatibilities:
\begin{eqnarray*}
&& a \cdot D(a^2) = a^2 \cdot D(a), \, D(a_0 \cdot a) = a_0 \cdot
D(a), \, D^2(a_0) = a_0^2 + \varepsilon D(a_0), \, \varepsilon D(a_0) = D(a_0); \\
&& 2 \, a \cdot D^2 (a) + a\cdot D(a_0) + a_0 \cdot a + D(a) + D^2 (a^2) + 2\, D^3 (a) = \\
&& = a^2 \cdot a_0 + 2\, D(a)\cdot D(a) + 3 \, a_0 \cdot D(a) + D(a^2) + 3 \, D^2(a);\\
&& 2\, (D(a) \cdot b) \cdot a + (a_0 \cdot b) \cdot a + a \cdot D(b) + D(a^2 \cdot b) + 2\, D\bigl( D(a) \cdot b \bigl) = \\
&& = a^2 \cdot D(b) + 2\, D(a) \cdot  (b\cdot a) + 2\, D(a) \cdot D(b) + a_0 \cdot (b\cdot a) + D(b\cdot a)
\end{eqnarray*}
for all $a$, $b\in A$ and
$$
{\rm H}^2_{\rm nab} \, (k_{\varepsilon}, \, A ) \cong {\mathcal C}
{\mathcal P} (A, \, k_{\varepsilon} ) / \approx
$$
where $\approx$ is the following equivalence relation: $(D, \, a_0)
\approx (D', \, a_0')$ if and only if there exists $r \in A$ such
that
\begin{equation} \eqlabel{ultimaec}
D'(a) = D(a) - a \cdot r, \quad a_0' = a_0 + r^2 - 2 \, D(r) +
\varepsilon \, r
\end{equation}

for all $a\in A$. Indeed, the set of all pairs of bilinear maps $(\triangleright, f)$, where $\triangleright : k \times A \to A$, $f : k \times k \to A$ are in bijective
correspondence with the set of all pairs $(D, \, a_0) \in {\rm End}_k (A) \times A$ and the bijection is given by $x \triangleright a := D(a)$, for all $a\in A$ and
$f(x, x) := a_0$. Via this identification, through a laborious but straightforward computation, we can easily prove that $(\triangleright, f)$ is a crossed system if and only if the pair $(D, \, a_0)$ satisfies the above compatibility conditions: we just mention that the first three compatibilities are equivalent to the axioms (CP2)-(CP4) while the last three compatibilities are equivalent to (CP5) (i.e. the missing relation \equref{missingrel} written for
the trivial action $\triangleleft$). If $(D, \, a_0)$ is a pair as above then any extension
of $k_{\varepsilon}$ by the Jordan algebra $A$ is cohomologous with the crossed product
$A^{(D, \, a_0)} := A \#_{D}^{a_0} \, k$, which is the vector space $A \times k$ with the multiplication given for any $a$, $b\in A$ by:
$$
(a, x) \circ (b, x) := (a\cdot b + D(a+b) + a_0, \, \varepsilon x)
$$

As a special case, for a positive integer $n$, let us take $A := k^n_{0}$, the abelian $n$-dimensional Jordan algebra, and $\varepsilon := 0$.
Then, we can easily prove that
$$
{\rm H}^2_{\rm nab} \, (k_{0}, \, k^n_{0 } ) \cong \{D \in M_n (k) \,\,  | \,\, 2\, D^3 - 3 \, D^2 + D = 0  \}
$$
where $M_n (k)$ is the usual space of $n\times n$-matrices over $k$. Indeed, from the third and the fourth relation above we obtain $a_0 := 0$ while from the other compatibilities the only non-trivial one comes down to $D(a) + 2 D^2 (a) = 3 D^2 (a)$, for all $a \in A = k^n$. Moreover, in this case the equivalence relation \equref{ultimaec} becomes $D' = D$.
\end{example}


\begin{thebibliography}{99}

\bibitem{AbH}
Abdelwahab, H. and Hegazi, A. - Construction of nilpotent Jordan algebras
over any arbitrary field, {\sl Comm. in Algebra}, {\bf 44}(2016),
5237--5256.

\bibitem{AbM}
Abdelwahab, H. and Martin, A. J. C. - A class of non-associative
Nilpotent Jordan algebras, {\sl J. Alg. App.}, {\bf 19}(2020), ID:
2150002.

\bibitem{am-2011}
Agore, A.L. and Militaru, G. - Extending structures II: the
quantum version, {\sl J. Algebra} {\bf 336} (2011), 321--341.

\bibitem{am-2014}
Agore, A.L. and Militaru, G. - Extending structures for Lie
algebras, {\sl Monatsh. f\"{u}r Mathematik}, {\bf 174} (2014),
169--193.

\bibitem{am-2016}
Agore, A.L. and Militaru, G. - Extending structures, Galois groups
and supersolvable associative algebras, {\sl Monatsh. f\"{u}r Mathematik} {\bf 181} (2016), 1--33.

\bibitem{am-2019}
Agore, A.L. and Militaru, G. -  Extending structures. Fundamentals
and Applications, Taylor and Francis Group, Monographs and
Research Notes in Mathematics, 2019, 224 pages.

\bibitem{am-2022}
Agore, A.L. and Militaru, G. -  The factorization problem for Jordan algebras. Applications, preprint 2022.

\bibitem{albert}
Albert, A. A. - On a Certain Algebra of Quantum Mechanics, {\sl
Annals of Mathematics}, Second Series,  {\bf 35}(1934), 65--73.

%\bibitem{albert2}
%Albert, A. A. - On Jordan algebras of linear transformations, {\sl
%Trans. Amer. Math. Soc.}, {\bf 59}(1946), 524--555.

%\bibitem{albert3}
%Albert, A. A. - A structure theory for Jordan algebras, {\sl
%Annals of Mathematics}, Second Series,  {\bf 48} (1947), 546--567.

\bibitem{AmWa}
Ambj\o rn, J. and Watabikic, Y. - Models of the universe based on
Jordan algebras, {\sl Nuclear Physics B}, {\bf 955}(2020), 115044.

%\bibitem{BS}
%Bahturin, Y., Shestakov, I. and Zaicev, M.V. - Gradings on simple
%Jordan and Lie algebras, {\sl J. Algebra}, {\bf 283}(2005),
%849--868.

%\bibitem{2-dim_C}
%Ancochea Bermudez, J.M., Campoamor-Stursberg, R., García Vergnolle, L., Sánchez Hernandez, J. - Contractions d’algèbres de Jordan en dimension 2, {\sl
%J. Algebra},  {\bf 319} (2008), 2395--2409.

%\bibitem{BF}
%Boyle, L. and Farnsworth, S. -  The standard model, the pati-salam
%model, and "jordan geometry", {\sl New J. Phys.}, {\bf 22}(2020),
%073023.

\bibitem{CDDV}
Carotenuto, A., Dabrowski, L. and Dubois-Violette, M. -
Differential calculus on Jordan algebra and Jordan modules, {\sl
Lett. Math. Phys.},  {\bf 109} (2019), 113--133.

%\bibitem{CE}
%Chevalley, C. and Eilenberg S. - Cohomology theory of Lie groups
%and Lie algebras, {\sl Trans. Amer. Math. Soc.}, {\bf 63}(1948),
%85--124.

\bibitem{CDE}
C\'{o}rdova-Martinez, A. S., Darehgazani, A. and Elduque, A.  - On
Kac's Jordan superalgebra, {\sl Linear Multil. Alg.}, {\bf
67}(2019), 490--498.

\bibitem{CoHo}
Corrigan. E. and Hollowood, T.J. - The Exceptional Jordan Algebra
and the Superstring, {\sl Commun. Math. Phys.},  {\bf 122}(1989),
393--410.

\bibitem{Farnsworth}
Farnsworth, S. - The geometry of physical observables, {\sl J.
Math. Phys.}, {\bf 61}(2020), 101702.

\bibitem{HA}
Hegazi, A. S. and Abdelwahab, H - Classification of
five-dimensional nilpotent Jordan algebras, {\sl Linear Algebra
App.}, {\bf 494}(2016), 165--218.

\bibitem{jacb}
Jacobson, N. -  Structure and Representations of Jordan Algebras,
Amer. Math. Soc., 1968.

%\bibitem{jacb2}
%Jacobson, N. - General Representation Theory of Jordan Algebras,
%{\sl Trans. Amer. Math. Soc.}, {\bf 70} (1951), 509--530.

\bibitem{pjordan}
Jordan, P. - \"{U}ber Verallgemeinerungsm\"{o}glichkeiten des
Formalismus der Quantenmechanik, {\sl Nachr. Akad. Wiss.
G\"{o}ttingen. Math. Phys.}, {\bf 41}(1933), 209--217.

\bibitem{jordannumm}
Jordan, P., von Neumann, J. and Wigner, E. - On an algebraic
generalization of the quantum mechanical formalism, {\sl Annals of
Mathematics}, {\bf 35}(1934), 29--64.

%\bibitem{K}
%Kashuba, I. - Variety of Jordan algebras in small dimension, {\sl Algebra Discrete Math.}, {\bf 2}(2006),
%62--76.

%bibitem{KOS}
%ashuba, I., Ovsienko, O. and Shestakov, I. - Representation type
%f Jordan algebras, {\sl Adv. Mathematics}, {\bf 226}(2011),
%85--418.

\bibitem{km0}
Kashuba, I. and Martin, M. E. - The variety of three-dimensional
real Jordan algebras, {\sl J. Algebra Appl.}, {\bf
15}(2016), 1650158.

\bibitem{KS}
Kashuba, I. and Serganova, V. - On the Tits–Kantor–Koecher construction of unital Jordan bimodules,
{\sl J. Algebra}, {\bf 481} (2017), 420--463.

\bibitem{km}
Kashuba, I. and Martin, M. E. - Geometric classification of
nilpotent Jordan algebras of dimension five, {\sl J. Pure
Appl. Algebra}, {\bf 222}(2018), 546--559.

\bibitem{kpa}
Kashuba, I. and Patera, J. - Graded contractions of Jordan algebras and of
their representations, {\sl J. Phys. A: Math. Gen.} {\bf 36} (2003), 12453--12473.

%\bibitem{Kac}
%Kac, V. G. - Classification of simple $\ZZ$-graded Lie
%superalgebras and simple Jordan superalgebras, {\sl Comm. in
%Algebra}, {\bf 5}(1977), 1375--1400.

\bibitem{Ko}
Koecher, M. - The Minnesota Notes on Jordan Algebras and their
Applications, Lecture Notes in Math. 1710, Springer Verlag,
Berlin, 1999.

%\bibitem{LRH}
%Liebmann, M., R\"{u}haak, H. and Henschenmacher, B. -
%Non-associative algebras and quantum physics -- A historical
%perspective, arXiv:1909.04027

\bibitem{lopez}
L\'{o}pez, A.F. - Jordan Structures in Lie Algebras, Mathematical
Surveys and Monographs, Amer. Math. Soc., {\bf 240} (2019), 299
pp.

\bibitem{martin}
Martin, M. E. - Four Dimensional Jordan Algebras, {\sl Int. J.
Math., Game Theory and Algebra}, {\bf 20}(2013), 41--59.

%\bibitem{MZ1}
%Martinez, C. and Zelmanov, E. - Representation theory of Jordan
%Superalgebras I, {\sl Trans. Amer. Math. Soc.},  {\bf 362} (2010),
%815--846.

%\bibitem{MZ2}
%Martinez, C.,  Shestakov, I. and Zelmanov, E. -  Jordan
%superalgebras defined by brackets, {\sl J. London Math. Soc.},
%{\bf 64}(2001), 357--368.

%\bibitem{Mc78}
%McCrimmon, K. - Jordan algebras and their applications, {\sl Bull. Amer. Math. Soc.}, {\bf 84} (1978), 612--627.

\bibitem{Mc}
McCrimmon, K. - A taste of Jordan algebras, Universitext,
Springer-Verlag, 2004.

%\bibitem{SSS}
%Sabinin, L., Sbitneva, L. and Shestakov, I.P. (Editors) -
%Non-Associative Algebra and Its Applications, Lecture Notes in
%Pure and Applied Mathematics, Chapman and Hall/CRC, 2006, 552
%pages.

%\bibitem{SoS}
%Solarte, O.F. and Shestakov, I. - Irreducible specializations of
%the simple Jordan superalgebra of Grassmann Poisson bracket, {\sl
%J. Algebra}, {\bf 455}(2016), 291--313.

%\bibitem{To}
%Townsend, P.K. - The jordan formulation of quantum mechanics: a
%review, arXiv:1612.09228.

%\bibitem{Zelmanov0}
%Zelmanov, E.I. -  Jordan rings with finiteness conditions, {\sl
%Algebra and Logic}, {\bf 17}(1978), 450--457.

%\bibitem{Zelmanov1}
%Zelmanov, E.I. - Prime Jordan algebras,  {\sl Algebra and Logic}
%{\bf 18}(1979), 103--111

%\bibitem{Zelmanov2}
%Zelmanov, E.I. -  On prime Jordan algebras. II, {\sl Sib. Math.
%J.}, {\bf 24}(1983), 89--104.

\bibitem{Zh2}
Zhevlakov, K.A., Sliniko, A.M., Shestakov, I.P. and Shirsho, A.I.
- Rings That Are Nearly Associative, Nauka, Moscow, 1978 (in
Russian); Academic Press, 1982 (English translation).

\end{thebibliography}
\end{document}